\definecolor{gray}{gray}{0.4}
\theoremstyle{definition}
\newtheorem{dfn}{Definition}[section]
\newtheorem{rmk}[dfn]{Remark}}
\newtheorem{pro}[dfn]{Proposition}
\newtheorem{lmm}[dfn]{Lemma}
\newtheorem{thm}[dfn]{Theorem}
\newtheorem*{thm*}{Theorem}
\newcommand{\bfx}{\mathbf{x}}
\newcommand{\mch}{\mathcal{H}}
\newcommand{\RR}{\mathbb{R}}
\newcommand{\QQ}{\mathbb{Q}}
\newcommand{\NN}{\mathbb{N}}
\newcommand{\ZZ}{\mathbb{Z}}
\newcommand{\IN}{\mathrm{in}}
\newcommand{\supth}[1]{#1^{\mathrm{th}}}
\title{Bia\l ynicki-Birula schemes in Hilbert schemes of points \\ and monic functors}
\author{Laurent Evain}
\address{Universit\'e d'Angers,
Facult\'e des Sciences,
D\'epartement de math\'ematiques,
2, Boulevard Lavoisier,
49045 Angers Cedex 01,
France}
\email{laurent.evain@univ-angers.fr}
\author{Mathias Lederer}
\address{Department of Mathematics,
University of Innsbruck,
Technikerstrasse 19a,
A-6020 Innsbruck,
Austria}
\email{mlederer@math.cornell.edu}
\begin{document}


\removeabove{0.5cm} \removebetween{0.5cm} \removebelow{0.5cm}

\maketitle

\begin{prelims}

\DisplayAbstractInEnglish

\bigskip

\DisplayKeyWords

\medskip

\DisplayMSCclass

\bigskip

\languagesection{Fran\c{c}ais}

\bigskip

\DisplayTitleInFrench

\medskip

\DisplayAbstractInFrench

\end{prelims}


\newpage

\setcounter{tocdepth}{1}

\tableofcontents


\section{Introduction}\label{intro}

Let $H^n(\mathbb{A}^d)$ be the Hilbert scheme
parametrizing zero-dimensional subschemes of
length $n$ in the affine $d$-space  $\mathbb{A}^d$ over a field $k$.
This scheme is mostly called the \emph{Hilbert scheme of points},
sometimes also the \emph{punctual Hilbert scheme}.
There is a natural action of the $d$-dimensional split torus on $\mathbb{A}^d$,
which induces a natural action on $H^n(\mathbb{A}^d)$.
If $T$ is a one-dimensional split subtorus
of the $d$-dimensional torus,
then $T$ defines the
\emph{Bia\l ynicki-Birula strata} $H^{BB(T,\Delta)}$
parametrizing the subschemes of $\mathbb{A}^d$ converging
to some fixed point $Z^\Delta$ under the action of $T$,
where $Z^\Delta$  is a monomial subscheme with staircase
$\Delta$. When $T$ is
general, any $T$-fixed point is monomial and is a $Z^\Delta$ for some
staircase $\Delta$. The case where $T$ is general is thus of
particular interest, however we will consider   $H^{BB(T,\Delta)}$ for
any $T$.

These stratifications are preeminent in most studies of the punctual Hilbert
scheme in dimension two. For instance, they
appear in the computation of the Betti
numbers (see \cite{esBetti}, \cite{esCells}), in the determination of
the irreducible components of (multi)graded Hilbert schemes
(see \cite{evainIrred}, \cite{MaclaganSmith}),
or in the study of the ring of symmetric functions
via symmetric products of embedded curves (see \cite{grojnowski}, \cite{nakajima}).

The Bia\l ynicki-Birula strata in $H^n(\mathbb{A}^2)$ are  affine spaces.
In contrast, not much is known on these  strata
for higher dimensional $\mathbb{A}^d$, and the difficulty to control
and describe the Bia\l ynicki-Birula
strata is probably one of the reasons why the Hilbert scheme of points is
still mysterious in higher dimensions.

In dimension three, the Bia\l ynicki-Birula strata are not
irreducible (Proposition \ref{BBmayBeIrreducible}). In higher
dimensions, they are not
reduced either \cite{jel2}.
It is therefore necessary to define them with
their natural scheme structure as representing a functor.
Apart from the necessity to define them schematically,
it is desirable to have functorial descriptions of Hilbert schemes at hand,
as these descriptions are known to be both powerful and easy to handle.

In the present paper,
we introduce the Bia\l ynicki-Birula functor parametrizing families of
subschemes $Z$ such that $\lim_{t\rightarrow 0,t\in T}t\cdot Z=Z^{\Delta}$ for
some fixed monomial subscheme $Z^\Delta$. We will prove (Theorem \ref{thm:BBrepresentable}):
\begin{thm*}
  The Bia\l ynicki-Birula functor is representable by a locally closed subscheme $H^{BB(T,
    \Delta)}(\mathbb{A}^d)$ of the Hilbert scheme $H^n(\mathbb{A}^d)$.
\end{thm*}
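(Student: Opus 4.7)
The plan is to realize the representing scheme as a closed subscheme of a distinguished $T$-stable open affine of $H^n(\mathbb A^d)$, cut out by explicit linear equations in the coefficients of the universal ideal.

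First, I would make the functor precise: $h^{BB(T,\Delta)}$ sends a scheme $S$ to the set of flat families $\mathcal Z\subset S\times\mathbb A^d$ of degree $n$ such that the morphism $S\times T\to H^n(\mathbb A^d)$ produced by the $T$-action extends to a morphism $S\times\mathbb A^1\to H^n(\mathbb A^d)$ whose restriction to $S\times\{0\}$ is the constant morphism with value $Z^\Delta$; since $H^n(\mathbb A^d)$ is separated, any such extension is unique. I would then cover $H^n(\mathbb A^d)$ by the affine opens $U_\Gamma$, indexed by standard sets $\Gamma$ of size $n$, parametrizing ideals whose quotient has $\{x^\alpha : \alpha\in\Gamma\}$ as a free basis. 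The open $U_\Delta$ is $T$-stable (the $T$-action merely rescales the monomial eigenvectors $x^\alpha$) and contains $Z^\Delta$, so for any family in $h^{BB(T,\Delta)}(S)$, continuity of the extended morphism at $t=0$ combined with $T$-invariance of $U_\Delta$ forces the classifying map to factor through $U_\Delta$.

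Working on $U_\Delta=\mathrm{Spec}\,R$, I would use the standard presentation of the universal ideal by generators $f_\beta=x^\beta-\sum_{\alpha\in\Delta}c_{\beta\alpha}x^\alpha$, with $\beta$ ranging over the minimal generators of $I_\Delta$ and $c_{\beta\alpha}\in R$. Transporting $f_\beta$ through the $T$-action and clearing the leading power of $t$ produces generators $g_\beta=x^\beta-\sum c_{\beta\alpha}\,t^{w(\beta)-w(\alpha)}x^\alpha$ in $R[t,t^{-1}][x]$ for the ideal of $t\cdot\mathcal Z$. The existence of a flat extension to $t=0$ forbids negative powers of $t$, and the requirement that the fibre at $0$ equal $I_\Delta$ additionally kills the $t^0$ terms; together these force $c_{\beta\alpha}=0$ whenever $w(\alpha)\ge w(\beta)$. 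The candidate representing subscheme is therefore
\[
V=V\bigl(c_{\beta\alpha} : w(\alpha)\ge w(\beta)\bigr)\subset U_\Delta,
\]
a closed subscheme of the open $U_\Delta$ and hence locally closed in $H^n(\mathbb A^d)$.

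It then remains to verify both directions of the universal property. Over $V$, the ideal $\bigl\langle x^\beta-\sum_{w(\alpha)<w(\beta)}c_{\beta\alpha}\,t^{w(\beta)-w(\alpha)}x^\alpha\bigr\rangle\subset\mathcal O_V[t][x_1,\dots,x_d]$ supplies an explicit extension to $V\times\mathbb A^1$ whose fibre at $t=0$ is $I_\Delta$, so that the universal family over $V$ lies in $h^{BB(T,\Delta)}(V)$. Conversely, for any $S$-family in the functor, the classifying map $\phi:S\to H^n(\mathbb A^d)$ factors through $U_\Delta$ by the previous paragraph, and the affine calculation forces $\phi^*c_{\beta\alpha}=0$ for $w(\alpha)\ge w(\beta)$, so $\phi$ factors through $V$. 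The main technical obstacle I expect to face is verifying flatness of the extended family \emph{relative to} $V\times\mathbb A^1$, rather than merely over each fibre; this should reduce to a relative Gr\"obner-style reduction confirming that $\{x^\alpha:\alpha\in\Delta\}$ remains a free basis of the universal quotient uniformly, including at the special fibre $t=0$.
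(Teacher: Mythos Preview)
Your approach is genuinely different from the paper's, and more direct: you work on the single open chart $U_\Delta$ and cut out the representing scheme by explicit linear equations in the border-basis coefficients, whereas the paper refines the partial weight order $<_\xi$ to two total orders $<_+,<_-$, develops the machinery of \emph{bounded} ideals and \emph{monic functors} $\mathcal H^{\mathrm{mon}(<,\Delta)}$, proves each of these representable by an inductive construction (Proposition~\ref{inductionForRepresentingMonicFamilies}, Theorem~\ref{cor:theMonicFuctorIsrepresentable}), and then realizes $\mathcal H^{BB(\Delta,\xi)}$ as the intersection $\mathcal H^{\mathrm{mon}(<_+,\Delta)}\cap\mathcal H^{\mathrm{mon}(<_-,\Delta)}$ (Proposition~\ref{BBFunctor=intersectionOfTwoMonicFunctors}). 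Your route yields explicit equations for the BB scheme; the paper's buys a reusable theory of monic functors and sidesteps the termination issues below.

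The gap is precisely the step you flag. A ``Gr\"obner-style reduction'' will not establish flatness over $V\times\mathbb A^1$ when the weight vector $\xi$ has entries of both signs: reducing $x^\gamma$ via the relations $g_\beta$ strictly lowers the $\xi$-weight at each step, but with negative-weight variables present the weight is unbounded below and the process need not terminate. This is exactly why the paper introduces bounded ideals and the uniformity Lemmata~\ref{finitnessLemma} and~\ref{secondFinitnessLemma}. What you actually need is a \emph{propagation} statement: on your subscheme $V$ (defined by $c_{\beta\alpha}=0$ for outer corners $\beta$ with $w(\alpha)\ge w(\beta)$), the vanishing $c_{\gamma\alpha}=0$ holds for \emph{every} $\gamma\notin\Delta$ with $w(\alpha)\ge w(\gamma)$. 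This is true, and can be proved by writing $c_{\gamma\alpha}$ as the $(\alpha,0)$-entry of a product of multiplication-by-$x_i$ matrices on the free module $\mathcal O_V[\Delta]$ and tracking weights along paths; every path contributing to $c_{\gamma\alpha}$ with $w(\alpha)\ge w(\gamma)$ necessarily contains a factor $c_{\beta\alpha'}$ with $w(\alpha')\ge w(\beta)$, hence vanishes on $V$. Once you have this, the elements $x^\gamma-\sum_{w(\alpha)<w(\gamma)}c_{\gamma\alpha}t^{w(\gamma)-w(\alpha)}x^\alpha$ lie in $\mathcal O_V[t][\mathbf x]$ for \emph{all} $\gamma\notin\Delta$, and together they exhibit $\{x^\alpha:\alpha\in\Delta\}$ as a free $\mathcal O_V[t]$-basis of the quotient, giving flatness and $I(0)=I^\Delta$ at once. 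Your sketch of the converse direction (``negative powers of $t$ are forbidden, and the $t^0$ terms are killed by $I(0)=I^\Delta$'') is correct in spirit but also needs a little care: the point is that for the $\alpha$ of \emph{maximal} weight with $c_{\beta\alpha}\neq0$, a suitable $t$-multiple of $g_\beta$ lies in $I(t)$ and specializes at $t=0$ to a nonzero element of $B[\Delta]\cap I^\Delta=0$, a contradiction; one then descends by induction on the weight.
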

The theorem is
constructive: if a flat family is given, the Bia\l
ynicki-Birula strata are computable by the algorithms
encapsulated in the proofs.

During the proof, a linchpin
construction is to consider ideals with a prescribed initial ideal
for a total order. The total order considered is any order
refining the partial order on the monomials induced by
their  weight for the torus action.
We prove the representability of the
corresponding functor
(Theorem
\ref{thm:theMonicFuctorIsrepresentable}).
\begin{thm*}
  The monic functor $ \mch^{{\rm mon}(<,\Delta)}$
  parametrizing ideals with initial ideal $I^\Delta=I(Z^\Delta)$ is representable.
\end{thm*}
There are results in the same circle of ideas when Gr\"obner
basis theory is workable \cite{lederer,lella2016}.
In the context of Theorem \ref{thm:theMonicFuctorIsrepresentable},
Gr\"obner basis theory does not apply
because of possible negative weights. As far as we know, monic
functors have never been considered in this context. We develop an original
approach for the proof as the ideas  from   \cite{lederer,lella2016} are not
easily adjustable.

Our sign convention for the weight
$\xi=(\xi_1,\dots,\xi_d)$ is that
the action
 of the one-dimensional subtorus $T$
on $\mathbb{A}^d$ is given
by $t\cdot (a_1, \ldots, a_d) :=
(t^{\xi_1}a_1,\dots,t^{\xi_d}a_d)$.
If $\xi_i\leq 0$,
then the closed points of $H^{BB(T,\Delta)}(\mathbb{A}^d)$
correspond to subschemes $Z$ whose support is in the hyperplane $x_i=0$.
This follows from the naive observation
that if $t\cdot Z$ tends to $Z^\Delta$, then the support of $t.Z$ tends to
the support of $Z^\Delta$. A much more subtle question is to ask
whether this remains true at the
schematic level, when we consider the Bia\l ynicki-Birula scheme with its
possibly non-reduced structure. The answer is positive.
Recall that there is a Hilbert-Chow morphism
$\rho_i:H^n(\mathbb{A}^d)\rightarrow {\rm Sym}^n(\mathbb{A}^1)\simeq \mathbb{A}^n$
which sends a point $p$ parametrizing a subscheme $Z$ to
(the coefficients of) the characteristic polynomial of the
multiplication by the $\supth{i}$ coordinate $x_i$ in $\mathcal{O}(Z)$.
We will prove
(see Theorem \ref{thm:HilbertChow}):
\begin{thm*}
  Let $\rho_i$ be the Hilbert-Chow morphism associated with the $\supth{i}$
  coordinate.  If $\xi_i\leq 0$, then  $H^{BB(T,
    \Delta)}(\mathbb{A}^d)$ is schematically included in the fiber
  $\rho_i^{-1}(0)$.
\end{thm*}

For simplicity, we have considered a field $k$ in this introduction.
But throughout the paper, we shall work over  a ring $k$ of arbitrary
characteristic. \medskip

After the first version of this article appeared, several authors dealt with the
Bia\l ynicki-Birula decomposition for singular schemes, algebraic
spaces, stacks...
with methods, scope, computability specific to each approach
and important applications
\cite{drinfeld,ahr,richarz,jel1,jel2,kambe}.

Let Z be an algebraic $k$-space of finite type equipped with a
$\mathbb G_m$-action. Drinfeld \cite{drinfeld} defines the
attractor $Z^+$, which informally is the functor whose points
are the points $z\in Z(k)$ with an existing limit $\lim_{t\rightarrow 0}t\cdot z$.
He proves that $Z^+$ is representable (Theorem~1.4.2, \emph{ibid.}).
Using the results by Drinfeld, it is possible to recover
some results of the present paper (see Section
\ref{sec:relationsToLitterature}).

Alper-Hall-Rydh \cite{ahr} work in the context of algebraic stacks.
They prove (Theorem~5.27, \emph{ibid.}) the existence of Bia{\l}ynicki-Birula decomposition for Deligne-Mumford stack of finite type over a field $k$, equipped with a $\mathbb{G}_m $-action.
They recover some results by Drinfeld (\emph{cf.} Remark~5.28, \emph{ibid.}).
\medskip

Richarz extends some representability
results by Drinfeld to algebraic spaces with an \'etale locally
linearizable $\mathbb{G}_ m$-action \cite[Theorem~A]{richarz}
\medskip

An important application of the Bia\l ynicki-Birula decomposition
is due to Jelisiejew. In \cite[Proposition~3.1]{jel1},
he realizes the Bia\l ynicki-Birula
decomposition in the multigraded Hilbert scheme of Haiman-Sturmfels
\cite{hs}. Then, in \cite{jel2},
the decomposition is used to prove
that the Hilbert scheme of points on a higher dimensional affine space
is non-reduced and has components lying entirely in characteristic $p$
for all primes $p$, and
that Vakil's Murphy's Law (every singularity type of finite type appears) holds up to retraction for this scheme.
\medskip

\noindent
\textbf{Proofs.} Let us say a word about the proofs. The action of $T$ on
$\mathbb{A}^d={\rm Spec}\, k[\bfx]={\rm Spec}\; k[x_1,\dots,x_d]$ with weight $\xi$
induces a partial order $<_\xi$
on the monomials of $k[\bfx]$: the
monomials are ordered according to their weight  for the $T$-action.
To control the Bia\l ynicki-Birula strata $H^{BB(T,\Delta)}$
we prove that, roughly speaking,  a subscheme $Z$ is in
$H^{BB(T,\Delta)}$ if and only if its initial ideal $\IN_{<_\xi}(I(Z))$ equals
the monomial ideal $I(Z^\Delta)$
(Proposition \ref{reformulationBB2Monic}). Therefore,
a natural strategy could be to introduce a monic functor
$ \mch^{{\rm mon}(<_\xi,\Delta)}$
parametrizing
ideals with a prescribed initial ideal and to show that this functor
is representable and isomorphic to the Bia\l ynicki-Birula functor.

However, a technical barrier is that the
initial ideal in our context is a poor analog of the same notion used in the context
of Gr\"obner basis, for instance in \cite[Chapter~15]{eisenbud} for two
reasons: the order on the monomials is a only a
\emph{partial }order, and when some weights of the action are negative, the
division algorithm may not terminate. This makes the above strategy
inefficient.

The modified strategy is the following.  We consider \emph{ total
  orders} rather than \emph{partial } orders when we introduce the monic
functors mainly because this condition
ensures functoriality (Remark \ref{rem:functorialityNeedsTotalOrder}).
However, because of the possible negative weights, we still don't
have  in general a monomial order  in the sense of  \cite{eisenbud}.
We prove that the monic functor
$ \mch^{{\rm mon}(<,\Delta)}$ parametrizing
ideals with initial ideal $I^\Delta$ is representable
(Theorem \ref{thm:theMonicFuctorIsrepresentable}).

In this modified strategy,  we realize the Bia\l ynicki-Birula
functor $\mch^{BB(T,\Delta)}$ as the intersection of two well-chosen monic functors
$\mch^{{\rm mon}(<_-,\Delta)}\cap \mch^{{\rm mon}(<_+,\Delta)}$
where $<_-$ and $<_+$ are total orders refining
$<_\xi$ (Proposition
\ref{BBFunctor=intersectionOfTwoMonicFunctors}).
Realizing the Bia\l ynicki-Birula functor as an intersection
is the functorial counterpart to the following remark :
having a prescribed initial ideal for the
partial order $<_\xi$ is equivalent to having the same prescribed initial
ideal for both $<_-$ and $<_+$.
 The representability
of the  Bia\l ynicki-Birula
functors then follows from the representability of the monic functors.


When dealing with representability of functors, constructions for
individual subschemes often require uniformity lemmas when one passes
to families. The paragon of this situation is the Castelnuovo-Mumford regularity,
in the construction of the Hilbert scheme. In contrast,
punctual subschemes localized at a fixed point $p$ of the projective
space are not representable by a closed subscheme of the Hilbert
scheme because the families lack a uniformity property : The smallest
infinitesimal neighborhood of $p$ containing
a family of such subschemes may be of arbitrary large order.

When the weights of the action are non-positive, the families that we consider are supported on
the origin. It may be surprising that we get the representability
in this context. The reason is that Bia\l ynicki-Birula families
are included in an infinitesimal neighborhood of uniform order.
This uniformity is settled in Lemma \ref{propSecondFinitnessLemma}
which says that the families of the Bia\l ynicki-Birula functor are
included in $x_i^n$ if $x_i$ is a coordinate with a non-positive weight,
where $n$ is the length of the parameterized punctual subschemes.
In some sense, these families are uniformly bounded. As to the monic
functors, some
boundedness condition is included in their definition,
\emph{i.e.} the families are included in $x_i^r$ for some $r$,
and one proves that $r$ may be chosen uniform
in Proposition \ref{pro:uniformBoundingForMonic}
to get the representability.


\subsection*{Acknowledgements}
We give many thanks to Robin Hartshorne, Diane Maclagan and Gregory G. Smith,
the organizers of the workshop {\it Components of Hilbert Schemes}
held at American Institute of Mathematics in Palo Alto, CA, in July 2010.
It was there that we first met and shared our thoughts;
the exceptionally productive and at the same time friendly atmosphere of that workshop triggered our collaboration.
Special thanks go to Bernd Sturmfels, who realized that we have many research interests in common,
and motivated us to work together.
The second author wishes to thank Allen Knutson, Jenna Rajchgot, and Mike Stillman for many fruitful discussions.
We thank the anonymous referee for useful comments.


\section{Bia\l ynicki-Birula functors and $\Delta$-monic families}
\label{sec:bialyn-birula-funct}

In this section, we introduce the Bia\l ynicki-Birula functors whose
geometric points are subschemes having a prescribed limit under the action of
a torus $T$. These subschemes are characterized by their
initial ideal for some order. Accordingly,
we  reformulate Bia\l ynicki-Birula families in terms of $\Delta$-monic
ideals (Proposition \ref{reformulationBB2Monic}).
Finally, we prove
an important uniformity lemma for Bia\l ynicki-Birula families (Corollary \ref{grobnerFamiliesAreBounded}).
\smallskip

In this paper, we consider schemes over a commutative ring $k$ of
arbitrary characteristic. We denote by $k[\bfx]$ the polynomial
ring $k[x_1,\dots,x_d]$. Similarly, for $e=(e_1,\dots,e_d) \in \NN^d$,
we use the multi-index notation $\bfx^e:=x_1^{e_1}\cdots x_d^{e_d}$.

A \emph{standard set}, or \emph{staircase}, is a subset $\Delta \subset \NN^d$ whose
complement $C := \NN^d\setminus \Delta$ satisfies $C+\NN^d = C$.
We call the minimal elements of $C$ the \emph{outer corners of $\Delta$.}
All standard sets under consideration will be of finite cardinality $n$.
The ideal generated by the monomials $\bfx^e,e\in C$ is denoted
by $I^\Delta$. The notation  $I^\Delta$ makes sense in $k[\bfx]$,
but more generally in any ring containing the
monomials $\bfx^e$, such as the ring $B[\bfx]$ introduced below.
We shall freely identify the monomials $\bfx^e$ with
their exponent $e$. In particular, the notion of a staircase of monomials makes sense.

If $B$ is a $k$-algebra, then the tensor product $B\otimes_k k[\bfx]$
is just $B[\bfx]$, the ring of polynomials with coefficients in $B$.
Similarly, we write $B[t,t^{-1},\bfx] := B[\bfx]\otimes_k
k[t,t^{-1}]$. Let $\xi\in \RR^d$, we denote by
$f_\xi\in (\RR^d)^*$ the linear form defined by $f_\xi(\alpha_1,\dots,\alpha_d)=\sum
  \alpha_i \xi_i$. If $\xi\in \ZZ^d$, and $I\subset B[\bfx]$,
$I_\xi[t,t^{-1}]\subset B[t,t^{-1},\bfx]$ denotes the ideal generated
by the elements $t\cdot g:= \sum t^{-f_\xi(e)}c_e\bfx^e$ where
$g=\sum c_e \bfx^e\in I$.
We denote by $I_\xi[t]\subset B[t,\bfx]$ the ideal
$I_\xi[t,t^{-1},\bfx]\cap B[t,\bfx]$. When $a\in k$, we denote by
$I_\xi[a]\subset B[\bfx]$ the ideal $\phi_a(I_\xi[t])$ where
$\phi_a:B[t,\bfx]\rightarrow B[\bfx]$ is the evaluation morphism
sending $t$ to $a$. In particular $I=I_\xi[1]$.

\begin{dfn}
 We denote by
  $\mch^{BB(\Delta,\xi)}(B)$, or more simply by
  $\mch^{BB(\Delta)}(B)$ when $\xi$ is obvious,
  the set of ideals $I\subset  B[\bfx]$
  such that $\lim_{t \rightarrow 0} t\cdot I=I^\Delta$, which means:
  \begin{itemize}
  \item $B[t,\bfx]/I_\xi[t]$ is a locally free $B[t]$-module of rank $n = \#\Delta$.
  \item $I_\xi[0]=I^\Delta\subset B[\bfx]$
  \end{itemize}
  $\mch^{BB(\Delta)}$ is a covariant functor from the category of
  $k$-algebras to the category of sets. We call it the \emph{Bia\l ynicki-Birula functor}.
\end{dfn}

Remark that the sign convention for the action
is consistent with the choices made in the
introduction. The action of the torus was $t\cdot (a_1, \ldots, a_d) :=
(t^{\xi_1}a_1,\ldots,t^{\xi_d}a_d)$
on $\mathbb{A}^d$.
This corresponds to the torus action on the polynomial ring $k[\bfx]$ which is trivial on scalars
  and is given by $t.\bfx^e := t^{-f_\xi(e)}\bfx^e$ on monomials.

The first bulleted item of the definition says that
${\rm Spec}\,B[t,\bfx]/I_\xi[t] \to {\rm Spec}\,B[t]$ is a finite flat family.
The second bulleted item says that its fiber over $t=0$
is the monomial subscheme of ${\rm Spec}\,B[t,\bfx]/<t>={\rm Spec}\,B[\bfx]$ defined by $I^\Delta$.
The limit $\lim_{t \rightarrow 0} t\cdot I$ is therefore a well-defined flat limit.


\begin{dfn} \label{defn:order}
  Let $\xi \in \RR^d$. We define the partial order $<_\xi$ on
  monomials in $k[\bfx]$ by setting $\bfx^e<_\xi\bfx^g$ if
  $f_\xi(e)<f_\xi(g)$, and letting $\bfx^e$ and $\bfx^g$ be incomparable if
  $f_\xi(e)=f_\xi(g)$. Since we identify monomials and exponents,
  we adopt the convention $f_\xi(\bfx^e):=f_\xi(e)$.
  \\
  A variable $x_i$ is called positive (resp. negative, non-positive,
  non negative) for $<_\xi$ if $\xi_i>0$ (resp $\xi_i<0$, $\xi_i\leq 0$, $\xi_i
  \geq 0$).
  \end{dfn}

If the weights $\xi_i$ are linearly independent
over $\QQ$, then $<_\xi$ is a total order on monomials.
Otherwise, the order is only partial.
The most interesting case for us  is when $\xi\in \ZZ^d$ is the weight
vector of the action.

\begin{dfn}
\leavevmode
  \begin{itemize}
  \item Let $\xi \in \RR^d$,  $<_\xi$ be the associated order,  and
  $f=\sum a_{e} \bfx^e\in B[\bfx]$.
  Let $\bfx^{e_1},\ldots,\bfx^{e_l}$ be the maximal monomials appearing
  in $f$ (with non-vanishing coefficients $a_{e_i},\,i=1\ldots l$). The \emph{initial form of $f$} for $<_\xi$ is $\IN(f):=\sum
  a_{e_i}\bfx^{e_i}$. This is a unique term when $<_\xi$ is a total order,
  but may be a sum of terms otherwise. We denote
  by $\IN(I)$ the ideal generated by the elements $\IN(f)$, $f\in I$.

  \item Let $I\subset B[\bfx]$ and $m=\bfx^e$ be a monomial. We denote by
  $\IN_m(I)\subset B$ the ideal generated by the elements $b\in B$ such that
  $\IN(f)=bm$ for some
  $f\in I$.

  \item
  Let $\Delta$ be a standard set of cardinality $n$.
  The ideal $I$ is called $\Delta$-monic
  if $\IN_m(I)=\langle 1 \rangle$ for all $m\notin \Delta$ and
  $\IN_m(I)=0$ for all $m\in \Delta$.
  \end{itemize}
\end{dfn}

The following proposition connects
Bia\l ynicki-Birula families and the order
$<_\xi$: an element $g$ is in the limit ideal
$I[0]=\lim_{t \rightarrow 0}t\cdot I$ iff its homogeneous components $g_i$ are
initial parts of elements of $I$.
\begin{pro}\label{limitAndOrder}
  Let $I\subset B[\bfx]$. Let $g=\sum_i g_i \in B[\bfx]$,
  with $g_i=\sum_{g_\xi(e)=i}c_e \bfx^e$. Then the following
  conditions are equivalent:
  \begin{enumerate}
\item $g\in I_\xi[0]$
\item $\forall i,\, \exists l_i\in I,\, \IN(l_i)=g_i$ for the order
  $<_\xi$.
    \end{enumerate}
\end{pro}
\begin{proof}
  To prove $2\Rightarrow 1$, let $s_i:=t^i(t\cdot l_i)$. We have $s_i\in
  I_\xi[t]$ and $s_i(0)=\IN(l_i)=g_i$. Thus $g_i\in I_\xi[0]$ and
  $g=\sum g_i \in I_\xi[0]$
  too.\\
  Conversely, if $g\in I_\xi[0]$, then $g=h(0)$ with $h\in
  I_\xi[t]$. We decompose $h$ as
  $h=\sum_j s_j$ with $s_j=P_j(t\cdot l_j)$ for some $l_j \in I$ and
  $P_j\in B[t,t^{-1},\bfx]$. By linearity, we may suppose that $P_j=c_j
  \bfx ^{a_j}t^{b_j}$. Since
  $P_j(t\cdot l_j)=t^{b_j+f_\xi(a_j)}(t\cdot (c_j\bfx^{a_j}l_j))$ we may suppose
  that $a_j=0$ and $c_j=1$. Grouping  terms by linearity, we may suppose that
  the exponents $b_j$ are pairwise distinct. If we put the weights on the
  variables with the formulas $deg(\bfx^e)=f_\xi(e)$ and
  $deg(t)=1$, then $s_j=t^{b_j}(t\cdot l_j)$ is the only term with degree
  $b_j$ in the sum $h=\sum_j s_j$. Since $h\in B[t,\bfx]$ it follows that
  $s_j\in B[t,\bfx]$ and $b_j\geq f_\xi(e)$ for every term $c_e \bfx^e$ in
  $l_j$. If $b_j> f_\xi(e)$ for every term, then $s_j(0)=0$ and we may
  ignore $s_j$ in the definition of $g=\sum s_j(0)$. If the equality
  occurs for some term, then $s_j(0)=\IN(l_j)$. This shows that the
  homogeneous components of $g$ are initial terms.
\end{proof}

\begin{lmm} \label{lm:rank_condition}
 Let $A$ be a commutative ring, $M$ be an $A$-module, $n\ge1$ be an integer and $f:A^n \oplus M
  \rightarrow A^n$ be an 
injective   morphism of $A$-modules. Then $M=0$.
\end{lmm}
\begin{proof}
First, we suppose that $A$ is Noetherian. Let $M^{
    i}$ be the direct sum $M\oplus \dots \oplus M$ with $i$ copies of
  $M$. Let $f_i:A^n\oplus M^{ i+1}\rightarrow A^n\oplus M^{i}$
 be  the morphism defined by
 $f_i(a,m_0,m_1,\dots,m_i)=(f(a,m_0),m_1,\dots,m_i)$.
In particular
  $f_0=f$. Let $c_i=f_0\circ f_1 \circ \dots \circ f_i: A^n\oplus
  M^{i+1}\rightarrow A^n$. Since $f$ is injective, $f_i$ and $c_i$ are
  injective for all $i\ge 0$.
  Let $M^i_{i+1}\subset M^{i+1}$ be the set of
  elements $(m_0,\dots,m_i)$ with $m_0=0$.  If $M$ is not zero, the
  inclusion
  $M^i_{i+1} \subset  M^{i+1} $ is strict .  The
  inclusions $0\oplus M^i=f_i(0\oplus M_{i+1}^i)\subset
  f_i(0\oplus M^{i+1})$, and $c_{i-1}(0\oplus M^i )\subset c_{i-1}\circ
  f_i(0 \oplus M^{i+1})=c_{i} (0 \oplus
  M^{i+1})$ are strict too by injectivity. Thus $M=0$, otherwise we would have a non stationary
  increasing sequence $c_{i-1}(0\oplus M^i)$ of submodules in the
  Noetherian module $A^n$.
  
  To prove the general case, let $N$ be the matrix
  of $g:A^n\rightarrow A^n$, where  $g$ is the restriction of $f$ to
  $A^n\oplus 0$, and let $m$ be any element of $M$.
  Let $B$ be the $\ZZ$-subalgebra of $A$ generated by
  the entries of $N$ and the entries of the vector $f(0,m)$. Let
  $h:B^n\oplus Bm \rightarrow B^n$ be the morphism of $B$-modules obtained by
  restriction of $f$. Since $B$ is Noetherian and $h$ is
  injective, we get $m=0$. 
\end{proof}

\begin{lmm}\label{lm:cobase}
  Let $I\subset B[\bfx]$ be an ideal with $\IN(I)=I^\Delta$ for the
  order $<_\xi$ and
  $B[\bfx]/I$ free of rank $\# \Delta$ as a $B$-module. Then the
  monomials $\bfx^e$, $e\in \Delta$ form a basis of $B[\bfx]/I$.
\end{lmm}
\begin{proof}
  The
  monomials $\bfx^e$, $e\in \Delta$  are independent mod $I$
  since $\IN(I)=I^{\Delta}$. We argue by contradiction and we  suppose
  that they generate
  a strict $B$-submodule $C\subset B[\bfx]/I$. 
  Then there exists  $f\in B[\bfx]$ whose class $\dot f \in B[\bfx]/I$ satisfies $\dot
  f \notin C$.
Let $B[\bfx^\Delta]\subset
  B[\bfx]$ be the $B$-submodule generated by the monomials $\bfx ^e, e\in
  \Delta$.
Replacing $f$ by
  $f-h$, $h\in B[\bfx^\Delta]$, one may suppose that $f$ has no terms
  in $\Delta$.  In particular, there exists $g\in I$, with
  $\IN(g)=\IN(f)$.  Replacing $f$ by $f-g$ lowers the initial form of
  $f$.   Repeating the process of killing the terms of $f$ in
  $\Delta$ and of
lowering the initial form $\IN(f)$,
one may suppose that
    $\IN(f)$ 
  is smaller than any monomial in the finite set $\Delta$.
  It follows that any term $\lambda_e \bfx^e$ in $f=\sum
  \lambda_e \bfx^e$ is smaller than any monomial in  $\Delta$ too, hence
  not in $\Delta$.
  In particular $f\in I^\Delta$ and
  there is a direct sum $$B f \oplus  B[\bfx^\Delta] \subset
  B [\bfx]=I^\Delta\oplus B[\bfx^\Delta].$$
    Since $\IN(I)=I^\Delta$, the intersection of $I$ with
  $B f \oplus  B[\bfx^\Delta]$ contains only elements of the form
  $bf\oplus 0$. Quotienting the displayed inclusion by $I$
  yields an injection $Bf/(I\cap Bf) \oplus B[\bfx^\Delta] \rightarrow
  B[\bfx]/I$.  This contradicts Lemma \ref{lm:rank_condition}
since $\mathrm{rank}( B[\bfx]/I)=\mathrm{rank}(B[\bfx^\Delta])=\#\Delta$.
  \end{proof}

\begin{pro} \label{reformulationBB2Monic}
  Let $\xi\in \ZZ^d$ and $<_\xi$ the associated partial order.
  Then $I\in \mch^{BB(\Delta,\xi)}(B)$ if, and only if, the following conditions are satisfied:
  \begin{enumerate}
    \item $\IN(I)=I^\Delta$
  \item $B[\bfx]/I$ is a locally free $B$-module of rank $\#\Delta$.
  \end{enumerate}
\end{pro}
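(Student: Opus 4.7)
The plan is to exploit the $\ZZ$-grading $W$ on $B[t,\bfx]$ defined by $W(t)=1$ and $W(\bfx^e)=\xi\cdot e$; under this grading every generator $t\cdot f=\sum t^{-\xi\cdot e}c_e\bfx^e$ of $I(t,t^{-1})$ is weight-$0$ homogeneous, so both $I(t,t^{-1})$ and $I(t)$ are $W$-homogeneous. The key identifications I would set up first are: the $B$-module isomorphism $t^{w-\xi\cdot e}\bfx^e\leftrightarrow \bfx^e$ between $(B[t,\bfx])_w$ and $B[\bfx]_{\le w}$---the $B$-span of monomials of weight $\le w$---carries $I(t)_w$ onto $I_{\le w}:=I\cap B[\bfx]_{\le w}$, so that $(B[t,\bfx]/I(t))_w\cong B[\bfx]_{\le w}/I_{\le w}$; and, writing $w(f):=\max\{\xi\cdot e : c_e\ne 0\}$ for $f=\sum c_e\bfx^e\in I$, the element $t^{w(f)}(t\cdot f)\in I(t)$ specializes at $t=0$ to $\IN(f)$.

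For $(\Rightarrow)$: flatness of $B[t,\bfx]/I(t)$ over $B[t]$ identifies $I(0)$ with the image of $I(t)$ at $t=0$, so the second identification gives $\IN(f)\in I(0)=I^\Delta$ for every $f\in I$. Since $I^\Delta$ is a monomial ideal supported in $C=\NN^d\setminus\Delta$, this yields (ii) and the vanishing half of (i). For the other half, given $e\in C$ I would use $\bfx^e\in I^\Delta=I(0)$ to pick a $W$-homogeneous lift $g\in I(t)_{\xi\cdot e}$ with $g(0,\bfx)=\bfx^e$; then $g(1,\bfx)\in I=I(1)$ has initial form exactly $\bfx^e$, giving $\IN_{\bfx^e}(I)=B$. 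Property (iii) is the fiber of the flat family at $t=1$.

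For $(\Leftarrow)$: I would prove that the graded $B[t]$-linear map $\Phi\colon B[t]^\Delta\to B[t,\bfx]/I(t)$ sending $e_\alpha\mapsto \bfx^\alpha$ is an isomorphism, which delivers both flatness of rank $n$ and, by specialization at $t=0$, the identity $I(0)=I^\Delta$. Via the identification of graded pieces, this reduces to showing that $\{\bfx^\alpha : \alpha\in\Delta,\ \xi\cdot\alpha\le w\}$ is a $B$-basis of $B[\bfx]_{\le w}/I_{\le w}$ for every $w$. Injectivity is immediate: a relation $\sum_{\alpha\in\Delta}a_\alpha\bfx^\alpha\in I$ would by (ii) have an initial form supported in $\Delta$ yet belonging to $I^\Delta$, forcing the relation to vanish.

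The main obstacle is surjectivity, because the naive weight-lowering reduction need not terminate when some $\xi_i<0$. My strategy is to first prove that $\phi\colon B^\Delta\to B[\bfx]/I$ is an isomorphism, and then deduce the graded refinement. Injectivity of $\phi$ is as above; for surjectivity I use Nakayama fiberwise, so the crux is to show that for each maximal ideal $\mathfrak{m}\subset B$ with residue field $\kappa$ the map $\phi_\kappa\colon \kappa^\Delta\to \kappa[\bfx]/I_\kappa$ is injective---then it is an isomorphism by the dimension count from (iii). The delicate step is transferring (ii) to $I_\kappa$: a lift $f\in I$ of $\bar f\in I_\kappa$ can have $w(f)>w(\bar f)$ when all top-weight coefficients of $f$ lie in $\mathfrak{m}$. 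I would remedy this by iteratively replacing $f$ with $f-\sum_e d_e f_e$, where the $d_e\in\mathfrak{m}$ are the top-weight coefficients of $f$ and $f_e\in I$ with $\IN(f_e)=\bfx^e$ are supplied by (i); this preserves $\bar f$ while strictly decreasing $w(f)\in\ZZ_{\ge w(\bar f)}$, so the process terminates and $\IN(\bar f)$ inherits support in $C$. Once $\phi$ is an isomorphism, writing any $\bfx^e$ with $e\in C$ and $\xi\cdot e\le w$ as $\sum_{\alpha\in\Delta}c_\alpha\bfx^\alpha$ in $B[\bfx]/I$, the same initial-form argument applied to $\bfx^e-\sum c_\alpha\bfx^\alpha\in I$ forces $c_\alpha=0$ whenever $\xi\cdot\alpha>\xi\cdot e$, yielding the graded surjectivity.
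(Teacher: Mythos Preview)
Your argument is correct, and it handles both directions with more care than the paper does. The paper only sketches the forward implication (``the converse being easy''); your treatment via the $W$-grading, identifying $I(t)_w$ with $I_{\le w}$ and lifting along the specialization $t=0$, makes this direction fully explicit.

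For the backward implication the two proofs diverge more substantially. The paper localizes, asserts that the monomials $\bfx^\alpha$, $\alpha\in\Delta$, form a $B$-basis of $B[\bfx]/I$ ``according to the second item'', and then argues module-theoretically: setting $P=B[t,\bfx]/I(t)$, $N=\langle\bfx^\alpha:\alpha\in\Delta\rangle$ and $Q=P/N$, it shows $Q(0)=0$ and $Q(t,t^{-1})=0$ and concludes $Q=0$. Your route is different in two ways. First, you do not take the basis claim for granted: you prove it by a fibrewise Nakayama argument, the heart of which is the weight-lowering iteration that uses condition (i) to replace a lift $f\in I$ of $\bar f\in I_\kappa$ by one with $w(f)=w(\bar f)$, thereby transferring condition (ii) to $I_\kappa$. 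Second, instead of the quotient-module trick, you prove freeness of $B[t,\bfx]/I(t)$ directly by identifying each graded piece with $B[\bfx]_{\le w}/I_{\le w}$ and checking that $\{\bfx^\alpha:\alpha\in\Delta,\ \xi\cdot\alpha\le w\}$ is a basis there. This buys you a cleaner justification of the basis step (the paper's appeal to ``the second item'' alone does not obviously give surjectivity of $B^\Delta\to B[\bfx]/I$) and sidesteps any finiteness hypothesis on $Q$ needed to deduce $Q=0$ from $Q(0)=Q(t,t^{-1})=0$. The paper's argument, on the other hand, is shorter and more conceptual once one accepts those two points.
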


\begin{proof}
  We only prove that the conditions imply that $I\in \mch^{BB(\Delta,\xi)}$,
  the converse being easy using Proposition~\ref{limitAndOrder}.

  Let us temporarily assume that $B[t,\bfx]/I_\xi[t]$ is a locally free $B[t]$-module.
  Then the flat limit $\lim_{t \rightarrow 0} t\cdot I$ exists
  and Proposition \ref{limitAndOrder} implies that this limit is
  $I^\Delta$.   For proving that $I\in \mch^{BB(\Delta,\xi)}(B)$,
  it therefore remains to show that
  $P[t] := B[t,\bfx]/I_\xi[t]$ is a locally free $B[t]$-module.

  Upon localizing $B$ (that is to say, upon replacing ${\rm Spec}\,B$
  with an affine open subset),
  we may assume that $B[\bfx]/I$ is $B$-free of rank $\#\Delta$.

  The monomials $\bfx ^e, e\in \Delta$ are a basis
  of $B[\bfx]/I$ by Lemma \ref{lm:cobase}.
  They are therefore also a basis of the $B[t,t^{-1}]$-module
  $B[\bfx]/I\otimes_k k[t,t^{-1}]\simeq B[\bfx,t,t^{-1}]/I_\xi[t,t^{-1}]$
  the latter isomorphism being given by the torus action $t\cdot \bfx^e =
  t^{-\xi\cdot e}\bfx^e$ on the polynomial ring.

  The monomials  $\bfx ^e$, for $e\in \Delta$,
  remain linearly independent in
  the $B[t]$-module $P[t]$. It remains to
 prove that $P[t]$ is a finite $B[t]$-module generated by the
  elements $\bfx^e, e\in \Delta$. It
  suffices to exhibit for every monomial $m\in B[\bfx]$, $m\notin\Delta$  a polynomial $P=m+\sum_{e
    \in \Delta}c_e\bfx^e$ with $P\in I_\xi[t]$. Since the elements
  $\bfx^e,e\in \Delta$ form a basis of $B[\bfx]/I$, the decomposition
  of $m$ yields an expression $m=\sum c_e\bfx^e\
  \mod I$.
  By condition 1), $c_e=0$ if $x^e \geq_\xi m$.  Thus we can take
  $P=t^{f_\xi(m)}(t\cdot (m-\sum c_e\bfx^e))$.
\end{proof}

\begin{rmk} According to the intuition from Gr\"obner bases, one could think
  that the second condition is a consequence of the first one. This is not the case, as is shown by
  the example $d=1$, $I=\langle x^2+x^3 \rangle$, $\xi=-1$, $\Delta=\{1,x\}$.
\end{rmk}

\begin{dfn}
  \label{bounded_ideal}
  Let $\xi \in \RR^d$ and $<_\xi$ the corresponding order.
  An ideal  $I\subset B[\bfx]$ with $B[\bfx]/I$ finite
  is called \emph{bounded} for $<_\xi$ if for every
  non-positive variable $x_i$, there exists an integer $r_i\geq 0$
  with $x_i^{r_i}\in I$.
\end{dfn}

\begin{lmm} \label{grobnerFamiliesAreBounded}
  Let  $I\in \mch ^{BB(\xi,\Delta)}(B)$ be an ideal. Then $I$ is bounded for $<_\xi$.
\end{lmm}
\begin{proof}
  Up to reordering the
  components of $\xi$, one can assume that $\xi_i\geq \xi_{i+1}$. We
  denote by $k$, $l$ the integers such that $\xi_i>0\Leftrightarrow i<k$
  and $\xi_i<0 \Leftrightarrow i\geq l$.


The monomials with exponents in
$\Delta$ form a basis of $B[\bfx]/I$ by Proposition
\ref{lm:cobase}. In particular, every
  monomial $\bfx^e$ leads to an element $\bfx^e+\sum_{m\in \Delta}c_m\bfx^m$
  in  $I$. For $i\geq l$, $r_i \gg 0$ and $\bfx^e=x_i^{r_i}$, we have
  $c_m=0$ since $\IN(I)=I^\Delta$. Thus $h_i:=x_i^{r_i}\in I$.

  Let $x_i$ be a non-positive variable of weight $0$.  To prove that
  $x_i^r \in I$ for large $r$, we shall prove that for any
  monomial $m\in B[x_l,\ldots,x_d]$, $x_i^r m\in I$ for large $r$,
  and subsequently apply that to the monomial $m=1$. If
  the exponent $(e_l,\dots,e_d)$
  of $m=x_l^{e_l}\cdots x_d^{e_d}$ is large (in concrete terms, if
  $e_i\geq r_i$ for some $i$), then $m\in I$ by the above. We are therefore left with a
  finite collection of $m$ for which the claim has to be checked. We proceed by
  induction over $m$. Let $m$ be a  minimal element of the finite
  collection for which the claim has not been
  proved yet.
  By our hypothesis on $I$, for large $r$, the monomial $x_i^r m$ lies in the limit ideal
  $I_\xi[0]$, thus $I$ contains an element $f=x_i^rm+R$,
  where all terms in $R$ are strictly smaller with respect to $<_\xi$ than $x_i^rm$.
  We write the monomials appearing in $R$ as $m_1m_2m_3$,
  with $m_1\in k[x_1\ldots,x_{k-1}],m_2\in k[x_k\ldots,x_{l-1}]$, $m_3\in
  k[x_l,\ldots,x_d]$. Such a product satisfies $m_1m_2m_3<_\xi x_i^r m$ only if
  $m_3<_\xi m$.
  By induction we know that when multiplying by an adequate power $x_i^p$, we get
  $x_i^pm_3\in I$, hence
  $x_i^{r+p}m=x_i^pf-x_i^pR \in I$, as required.
\end{proof}

\begin{pro}
\label{propSecondFinitnessLemma}
Let  $I\in \mch ^{BB(\xi,\Delta)}(B)$ be an ideal.
Then  for every non-positive variable $x_i$, we have $x_i^n \in I$, where $n:=\#\Delta$ is the cardinality of $\Delta$.
\end{pro}

\begin{proof}
  After applying a suitable permutation, we may assume that $x_1,\ldots,x_l$
  (resp. $x_{l+1},\ldots,x_d$) are the non-positive (resp. positive)
  variables, and we shall prove that $x_1^n \in I$.
  For every monomial $m \in k[x_2,\ldots,x_d]$, there exists a
  unique $h(m) \in \NN$, call it the \emph{height},
  such that $m^-:=x_1^{h(m)-1}m\in \Delta$ and $m^+:=x_1^{h(m)}m\notin
  \Delta$. In particular, $h(m)=0$ iff $m\notin \Delta$,
  and all heights of all $\bfx^e \in k[x_2,\ldots,x_d]$ sum up to $n$, the cardinality of $\Delta$.

  Let $M \subset k[x_2,\ldots,x_l]$ be the set of monomials with exponents in
  $\Delta' := \NN^{l-1} \cap \Delta$.
  The set $M$ is finite, and we number its elements
  such that $m_1^+\leq _\xi m_2^+ \leq _\xi \cdots \leq _\xi m_{\#M}^+$. For any monomial $m \in k[x_2,\ldots,x_d]$,
  we define $H(m):=\sum_{m_i^+\leq_\xi m^+} h(m_i)$.
  Note that  $H(m)\leq \sum_{\bfx^e \in
  k[x_2,\ldots,x_d]} h(\bfx^e)=n$. In particular, if we prove
  \begin{equation}\label{*}
    \forall m=\bfx^e\in k[x_2,\ldots,x_l],\ x_1^{H(m)}m\in I
  \end{equation}
  then for the particular choice $m:=1$, \eqref{*} implies $x_1^n \in I$, which concludes the proof.

  Since $I$ is bounded by Lemma \ref{grobnerFamiliesAreBounded},
  for each $i$ with $2\leq i \leq l$, there exists some $r_i$ such that $x_i^{r_i}\in I$.
  Thus \eqref{*} is true if $x_i^{r_i}$ divides $m$.
  It follows that the set $C:=\left\{m\in k[x_2,\ldots,x_l]\mid m \text{ is a monomial},\, x_1^{H(m)}m\notin I\right\}$ is
  finite. If \eqref{*} is not true, then $C\neq \emptyset$ and $C$ contains an
  element $m_0$ which is minimal in the sense that $m_0^+\leq_\xi m^+$ for every $m\in C, m
  \neq m_0$.

The decomposition of $m_0^+$ on the basis $\bfx^e,e\in \Delta$ of  $B[\bfx]/I$ yields an expression
$f=m_0^++\sum_{e\in\Delta}c_e\bfx^e$ with $f\in I$. By Proposition \ref{reformulationBB2Monic},
 $c_e=0$ if  $x^e\geq_\xi m_0^+$ . Thus
   \begin{equation}\label{**}
     f=m_0^++\sum_{\bfx^e<m_0^+}c_e \bfx^e.
   \end{equation}
We have
  \begin{displaymath}
    \bfx^e=x_1^{e_1}\cdots x_d^{e_d}\in \Delta \Rightarrow x_1^{e_1}\cdots
    x_l^{e_l}\in \Delta  \Rightarrow e_1<h(x_2^{e_2}\cdots x_l^{e_l}).
  \end{displaymath}
  Since $x_1$ is a non-positive variable, and $x_{l+1},\dots,x_d$ are
  positive, we get:
  \begin{displaymath}
    (x_2^{e_2}\cdots x_l^{e_l})^+=x_1^{h(x_2^{e_2}\cdots
    x_l^{e_l})}(x_2^{e_2}\cdots
    x_l^{e_l})\leq_\xi x_1^{e_1}x_2^{e_2}\cdots x_l^{e_l}x_{l+1}^{e_{l+1}}\cdots
    x_d^{e_d}=\bfx^e
  \end{displaymath}

  Here is the upshot of the above: if $C \neq \emptyset$, $m_0 \in C$ is its minimum,
  and a monomial $\bfx^e=x_1^{e_1}\cdots x_d^{e_d}$
  appears in a term of $f$, then $m_e := x_2^{e_2}\cdots x_l^{e_l}$
  satisfies $m_e^+\leq _\xi \bfx^e <_\xi m_0^+$. Thus $H(m_e)\leq H(m_0)-h(m_0)$,
  and by minimality of $m_0$, $m_ex_1^{H(m_e)}\in I$. It follows that
  the multiple $\bfx^ex_1^{H(m_0)-h(m_0)}$ lies in $I$. The product of the expression \eqref{**} with
  $x_1^{H(m_0)-h(m_0)}$ yields
  \begin{displaymath}
    m_0x_1^{H(m_0)}=m_0^+x_1^{H(m_0)-h(m_0)}=fx_1^{H(m_0)-h(m_0)}-\sum c_e\bfx^ex_1^{H(m_0)-h(m_0)} \in I,
  \end{displaymath}
  a contradiction. It follows that $C = \emptyset$, and \eqref{*} is true.
  \end{proof}

Using the order $<_\xi$, it not in general possible to make a division
like in Gr\"obner basis theory because the algorithm may not terminate,
due to the possible negative signs in the coordinates of
$\xi$. However, for Bia\l ynicki-Birula families, a substitute of a
division is possible. This proposition will not be used in the
sequel, but we include it for itself.
\begin{pro} \label{pseudo-division}
  Let $I\in \mch ^{BB(\xi,\Delta)}(B)$.
  Let $o_1,\ldots,o_u$ be
  the outside corners of $\Delta$, and $f_1,\dots,f_u\in I$ be elements
  with $\IN_{<_{\xi}}(f_i)=o_i$. Then for all $f\in B[\bfx]$,
  there exists a division
  \begin{displaymath}
    f=\sum \lambda_i f_i +R_\Delta + R'
  \end{displaymath}
  such that
  \begin{itemize}
  \item
  each term $\tau=b\bfx^e$ of $R_\Delta$ satisfies $e\in \Delta$,
  \item
  for every term $c\bfx^s$ of $R'$ and for every $m \in \Delta$,
  $\bfx^s<_\xi \bfx ^m$.
  \end{itemize}
 For every such division,
 \begin{itemize}
 \item $R_\Delta$ is independent of the choice of the division,
 \item $f\in I$ if, and only if, $R_\Delta=0$,
 \item the map $f\mapsto R_\Delta$ is a homomorphism of $B$-modules $B[\bfx] \to B[\bfx]/I$,
 where we identify the latter module with $B[\Delta] := \oplus_{e \in
   \Delta} B \bfx^e$
 using Lemma \ref{lm:cobase}.
 \end{itemize}
\end{pro}

\begin{proof}
  To construct the expected expression
  $f=\sum \lambda_i f_i +R_\Delta +R'$,
  we proceed in several steps.
  At each step $j$, we have an expression $f=\sum \lambda_{ij} f_i
  + T_j$. For $j=0$, we take $\lambda_{i0}:=0$, $T_0:=f$. We decompose $T_j=R_{\Delta,j} +R'_j$, with
  $R_{\Delta,j}:=\sum_{m\in \Delta}c_{mj}m$ and $R'_j:=\sum_{m\notin
    \Delta}c_{mj}m$.  The initial part $\IN_{<_{\xi}}(R'_j)$ contains
  a term
  $\mu_j \IN_{<_{\xi}}(f_{i_j})$ for some $f_{i_j}$ by hypothesis. We set
  $\lambda_{i_j(j+1)}:=\lambda_{i_jj}+\mu_j$ and
  $\lambda_{i'(j+1)}:=\lambda_{i'j}$ for $i'\neq i_j$. Then $T_{j+1}:= f-\sum
  \lambda_{i(j+1)} f_i=T_j-\mu_j f_{i_j}$ decomposes, analogously as above, into $T_{j+1}=R_{\Delta,j+1} +R'_{j+1}$.

  If, for some $j$, it happens that $R'_j=0$,
  then we define $R_\Delta:=R_{\Delta,j}$
  and $R':=0$, and have constructed the expected
  expression. Otherwise, after a finite number of steps,
  the terms in $R'_j$ are arbitrarily small and the condition
  of the second bullet is satisfied with $R'=R'_j$ and
  $R_\Delta=R_{\Delta,j}$.


  If $f\in I$, then in the expression $f=\sum \lambda_i f_i +R_\Delta +R'$, we have
  $R_\Delta=0$ since $\IN_{<_{\xi}}(f-\sum \lambda_i f_i)$ cannot lie in $\Delta$ by
  hypothesis. In particular, if $f=\sum \mu_i f_i +S_\Delta +S'$ is another division,
  we take their difference and obtain a division of $0\in I$, which implies
  $R_{\Delta}=S_\Delta$.

  It is obvious that $f\mapsto R_\Delta$ is a homomorphism of
  $B$-modules as it is possible
  to add divisions, or to multiply them with a scalar $\lambda\in
  B$.

  Let us now consider the $B$-submodule $B[\Delta]$ of $B[\bfx]$.
  The identity on
  $B[\Delta]$ factors as $B[\Delta]\rightarrow B[\bfx] \rightarrow
  B[\Delta]$ where the first arrow is the inclusion and the second is
  the morphism $R_\Delta$. The above implies that this factorization induces a factorization
  $B[\Delta]\rightarrow B[\bfx]/I \rightarrow B[\Delta]$. This composition
  is surjective between locally free modules of the same rank, so it is an
  isomorphism. In particular, we obtain that
  $R_\Delta=0$ implies $f\in I$.
\end{proof}

\section{Definition of monic functors}
\label{sec:base-changes-monic}

The goal of this section is to define monic functors, which
parameterize ideals with a prescribed initial ideal.  In general,
the initial ideal does not commute with arbitrary base
change, but only with flat base change
(see \cite{bayerGalligoStillman}). However,
we prove that  $\Delta$-monic families are
stable by arbitrary (non flat) base change (Proposition \ref{refinedStatementForMonicBaseChanges}) and
functoriality follows.

\begin{rmk}\label{rem:functorialityNeedsTotalOrder}
  When working with monic functors, we will consider total orders
  rather than the partial orders used in the previous section. The
  reason is that we control the base changes and the functoriality
  for the graded parts
  $\IN_m(I)$. With a total order, $\IN(I)=\bigoplus_{m\
    \textrm{monomial}}\IN_m(I)m$, thus we control the base change of the
  initial ideal through its graded pieces. The following
  proposition shows that this control may fail  for a partial order.
\end{rmk}
\begin{pro}
  Let $I\subset B[\bfx]$ be a $\Delta$-monic ideal for an order $<$
  refining the partial order $<_\xi$, ie. $\IN_{<,m}(I)=B$ if $m\notin
  \Delta$ and $\IN_{<,m}(I)=0$ if $m\in \Delta$. If $<$ is a total order, then
  $\IN(I)=I^{\Delta}$. This may not be true if $<$ is not a total
  order.
\end{pro}
\begin{proof}
  If the order is total, then the initial ideal is generated by
  terms thus it is graded by the degrees of the corresponding monomials.
  As for the counterexample, we take $I\subset k[x,y]$ generated by
  $<x+y,x^2,xy,y^2>$, $\Delta=\{1,x,y\},\xi=(1,1)$ and $<$ identical
  to $<_\xi$ (trivial refinement). Then the sum of the graded
  parts $\oplus_{m\ \mathrm{monomial}} \IN_m(I)m=I^\Delta$ but $\IN(x+y)=x+y\in
  \IN(I)$ is not in $I^\Delta$.
\end{proof}



\begin{pro} \label{refinedStatementForMonicBaseChanges}
  Let $<$ be a total order refining $<_\xi$. Let $I\subset B[\bfx]$
  be a bounded
  ideal. Let $m$ be a monomial.
 If for all $m'\geq m$, either $\IN_{m'}(I)=0$ or
  $\IN_{m'}(I)=\langle 1 \rangle$ holds,
  then for every base change $B\rightarrow A$, the equality $\IN_m(IA[\bfx])=\IN_m(I)A$ holds.
\end{pro}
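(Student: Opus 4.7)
The plan is to mimic the proof of Proposition~\ref{baseChangeForMonicFamilies} almost verbatim, and to verify that the only hypothesis on initial ideals actually used in that argument is invoked for monomials $m'$ which are strictly greater than $m$. Thus the weaker hypothesis of the present proposition already suffices, and no genuinely new input is required.

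More concretely, I would first dispatch the easy case $\IN_m(I)=\langle 1\rangle$ as before, since the inclusion $\IN_m(I)A\subseteq \IN_m(IA[\bfx])$ is automatic from functoriality of initial forms with respect to ring maps. The remaining case is $\IN_m(I)=0$. Arguing by contradiction, I would assume there exists $f\in IA[\bfx]$ with $\IN(f)=am$, $a\neq 0$, and write $f=\sum a_if_i$ with $a_i\in A$ and $f_i\in I$. Using boundedness of $I$ together with the polynomials $x_j^{d_j}\in I$ for the negative variables, I may replace $f$ and each $f_i$ by polynomials free of terms divisible by any $x_j^{d_j}$. Let $m'$ be the maximal monomial appearing in any $f_i$; boundedness ensures that among all such expressions one can choose $m'$ minimal.

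The heart of the argument is the observation that this minimal $m'$ satisfies $m'>m$. First, $m'\geq m$: since $f=\sum a_i f_i$ has maximal monomial $m$, some $f_i$ must contain a monomial at least $m$, so the global maximum $m'$ satisfies $m'\geq m$. Second, if $m'$ equalled $m$, some $f_i$ would have initial form $\lambda_i m$ with $\lambda_i\neq 0$, contradicting the assumption $\IN_m(I)=0$. Hence $m'>m$, and in particular $m'\geq m$, so the refined hypothesis applies to $m'$: either $\IN_{m'}(I)=0$ or $\IN_{m'}(I)=\langle 1\rangle$.

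I would then conclude as in the previous proof. Letting $J=\{i:\IN(f_i)=\lambda_i m'\}$, the coefficient of $m'$ in $f$ vanishes (because $m'>m=\IN(f)$), giving $\sum_{i\in J}a_i\lambda_i=0$ with not all $\lambda_i$ zero; this rules out $\IN_{m'}(I)=0$, so $\IN_{m'}(I)=\langle 1\rangle$, and there exists $g\in I$ with $\IN(g)=m'$ (which one may again take with no terms divisible by the $x_j^{d_j}$). Replacing $f_i$ by $f_i-\lambda_i g$ for $i\in J$ yields a new expression for $f$ whose maximal monomial is strictly smaller than $m'$, contradicting the minimality of $m'$.

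The potential obstacle is purely bookkeeping: one must confirm that every monomial $m'$ for which the hypothesis is invoked actually satisfies $m'\geq m$. The step just described verifies this, so the refinement comes essentially for free from re-reading the earlier proof with this observation in mind.
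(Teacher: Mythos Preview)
Your proposal is correct and is precisely what the paper intends: the paper does not give a separate proof but merely remarks that ``with the same proof'' as Proposition~\ref{baseChangeForMonicFamilies} one obtains this refined statement. You have carried out that verification, checking that the hypothesis on $\IN_{m'}(I)$ is only ever invoked for monomials $m'\geq m$ (in fact $m'>m$), which is exactly the point.
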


\begin{proof}
  Obviously, $\IN_m(I)=\langle 1 \rangle$ implies $\IN_m(IA[\bfx])=\langle 1 \rangle$.
  If $\IN_m(I)=0$, we argue by contradiction, supposing that
  $\IN_m(IA[\bfx])\neq 0$. Choose $f\in IA[\bfx]$
  with $\IN(f)=am$, $a\neq 0$.  Let $x_j$ be a non-positive variable. Then
  $x_j^{d_j}\in I$ for $d_j$ large. In particular, replacing $f$ with
  $f-\lambda x_j^{e}$, we may assume that $f$ has no term divisible
  by $x_j^{d_j}$. Choose an expression
  \begin{equation}\label{combo}
    f=\sum a_if_i
  \end{equation}
  with $a_i\in
  A$ and $f_i\in I \subset B[\bfx]$. As above, we may assume that none of the $f_i$ contains
  a term divisible by $x_j^{d_j}$. Let $m'$ be the maximal monomial
  appearing in the $f_i$. Note that $m'\geq m$ and, more precisely, $m'>m$
  since $\IN_m(I)=0$.
  Among all possible expressions \eqref{combo}, choose one with minimal $m'$.
  (Although $<$ is not necessarily a monomial order, a minimal
  $m'$ exists since we have bounded the exponent of the non-positive
  variables.) Let $J:=\{i : \IN(f_i)=\lambda_i m'\}$ be the set of indices of $f_i$ with initial monomial $m'$.
  The coefficient of $m'$ vanishes on the right hand side of \eqref{combo}, thus
  $\sum_{i\in J} a_i \IN(f_i)=\sum_{i\in J}
  (a_i\lambda_i)m'=0$. Thus $\sum_{i\in J} a_i\lambda_i=0$. Let $g\in I$ with
  $\IN(g)=m'$ and such that $g$ has no term divisible
  by $x_j^{d_j}$. Let $f'_i:=f_i$ if $i\notin J$ and $f'_i:=f_i-\lambda_i
  g$ if $i\in J$. Then
  \begin{displaymath}
    f=\sum a_if'_i
  \end{displaymath}
  and this expression contradicts the maximality of $m'$.
\end{proof}

Let $X={\rm Spec}\,A$ be the affine scheme corresponding to the ring $A$.
The ideal $\IN_m(I) \subset A$ defines a closed subscheme of $X$.
If $X$ is a scheme which is not affine, we wish to
glue the local constructions we have been working with so far.
Since open immersions are flat, the following proposition
implies that gluing is possible and that for any sheaf of ideals $\mathcal{I} \subset \mathcal{O}_X[x]$,
there is a well-defined sheaf
of ideals $\IN_m(\mathcal{I}) \subset \mathcal{O}_X$ on a possibly non-affine scheme $X$.
This allows us to speak of \emph{bounded} and \emph{monic}, resp., \emph{ideal sheaves} rather than ideals.

\begin{pro}\label{baseChangeBGS}
  Let $<$ be a total order refining $<_\xi$. Let $I\subset B[\bfx]$
  be a bounded
  ideal.
  Let $B\rightarrow A$ be a ring homomorphism which makes $A$ a flat $B$-module. Then
  $\IN_m(IA[\bfx])=\IN_m(I)A$.
\end{pro}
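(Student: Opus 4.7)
The plan is to run the standard flat-base-change argument for initial ideals, using the filtration of $B[\bfx]$ induced by the quasi-homogeneous order $<$. The easy inclusion $\IN_m(I)A \subset \IN_m(IA[\bfx])$ is immediate: any $f \in I$ with $\IN(f) = bm$ yields the same initial form in $IA[\bfx]$ after extension of scalars, so $b \in \IN_m(IA[\bfx])$, and the latter being an $A$-ideal forces $\IN_m(I)A \subset \IN_m(IA[\bfx])$.

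For the reverse inclusion, I would introduce the $B$-submodules $B[\bfx]_{\leq m} := \bigoplus_{\bfx^e \leq m} B \bfx^e$ and $B[\bfx]_{<m} := \bigoplus_{\bfx^e < m} B \bfx^e$ of $B[\bfx]$. These are direct summands, so the quotients are $B$-free. Setting $I_{\leq m} := I \cap B[\bfx]_{\leq m}$ and $I_{<m} := I \cap B[\bfx]_{<m}$, the ``coefficient of $m$'' map $I_{\leq m} \to B$ has image $\IN_m(I)$ and kernel $I_{<m}$, yielding a short exact sequence of $B$-modules
\begin{equation*}
0 \to I_{<m} \to I_{\leq m} \to \IN_m(I) \to 0.
\end{equation*}

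Next, I would apply $-\otimes_B A$. By flatness, the inclusion $I \hookrightarrow B[\bfx]$ identifies $I \otimes_B A$ with $IA[\bfx]$ inside $A[\bfx]$, and applying $-\otimes_B A$ to the sequence $0 \to I_{\leq m} \to I \to I/I_{\leq m} \to 0$, combined with the fact that the injection $I/I_{\leq m} \hookrightarrow B[\bfx]/B[\bfx]_{\leq m}$ remains an injection after tensoring (again by flatness), yields
\begin{equation*}
I_{\leq m} \otimes_B A = IA[\bfx] \cap A[\bfx]_{\leq m} = (IA[\bfx])_{\leq m},
\end{equation*}
and likewise for $< m$. Tensoring the three-term sequence above by $A$ and comparing with the analogous sequence built from $IA[\bfx]$ identifies $\IN_m(I) \otimes_B A$ with $\IN_m(IA[\bfx])$. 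Flatness applied once more to $\IN_m(I) \hookrightarrow B$ identifies $\IN_m(I) \otimes_B A$ with the extended ideal $\IN_m(I)A \subset A$, which gives the desired equality.

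The main obstacle, really a careful bookkeeping step, is the identification $I_{\leq m} \otimes_B A = (IA[\bfx])_{\leq m}$; this is the only place where flatness of $A/B$ is genuinely used. Note that boundedness of $I$ plays no essential role in this argument; it is a hypothesis inherited from the previous propositions, where the base change was allowed to be arbitrary rather than flat.
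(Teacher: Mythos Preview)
Your argument is correct, and it is essentially the standard homological proof of the flat-base-change result for initial ideals: filter by $B[\bfx]_{\leq m}$, identify $\IN_m(I)$ with the associated graded piece $I_{\leq m}/I_{<m}$, and use that flatness preserves the relevant injections. Each step checks out; in particular, the identification $I_{\leq m}\otimes_B A=(IA[\bfx])_{\leq m}$ follows exactly as you outlined, since $B[\bfx]/B[\bfx]_{\leq m}$ is $B$-free and the injection $I/I_{\leq m}\hookrightarrow B[\bfx]/B[\bfx]_{\leq m}$ therefore survives tensoring.

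The paper does not actually give a proof: it simply invokes Theorem~3.6 of Bayer--Galligo--Stillman for the monomial-order case and remarks that the same argument works here once one ``takes care of the high powers of the negative variables'' as in the proof of Proposition~\ref{baseChangeForMonicFamilies}. That remark is precisely where boundedness enters in the paper's intended argument: the BGS-style reasoning (like the proof of Proposition~\ref{baseChangeForMonicFamilies}) proceeds by choosing a \emph{minimal} monomial among those appearing in a putative counterexample, and since a quasi-homogeneous total order need not be a well-order, one needs boundedness to confine the relevant monomials to a finite hypercuboid so that a minimum exists. Your filtration argument sidesteps this entirely---it never appeals to well-foundedness---so you are right that boundedness is not needed for the flat case. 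This is a genuine advantage of your route; the trade-off is that the paper's approach keeps the flat and non-flat cases (Propositions~\ref{baseChangeForMonicFamilies} and~\ref{baseChangeBGS}) methodologically parallel.
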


\begin{proof}
  Theorem 3.6 of \cite{bayerGalligoStillman} proves the statement
  in the case where $<$ is a monomial order. The same proof also
  goes through in our context, provided that we take care of
  the high powers of the non-positive variables as we did in the
  proof of Proposition \ref{refinedStatementForMonicBaseChanges}.
\end{proof}


\begin{dfn} \label{defMonicFunctor}
  Let $<$ be a total order refining $<_\xi$ and $\Delta$ be a standard set. Let $B$ be a
  $k$-algebra and
  $\mch^{{\rm mon}(<,\Delta)}(B)$ be the set of ideals $I\subset B[\bfx]$ such
  that
  \begin{itemize}
  \item $I$ is bounded and $\Delta$-monic
  \item $ B[\bfx]/I$ is $B$ locally free of rank $\#\Delta$
  \end{itemize}
  This defines a covariant functor $\mch^{{\rm mon}(<,\Delta)}$,
  which we call a \emph{monic functor}, from the category
  of 
  $k$-algebras to the category of sets.
\end{dfn}

\begin{rmk} The above definition makes sense. Indeed,
it follows from the base change Proposition
\ref{refinedStatementForMonicBaseChanges}
that being a $\Delta$-monic ideal is stable by base change.
The boundedness property and the local freeness are also stable
by base change.
 \end{rmk}

\section{Monic functors are representable}
\label{sec:monic-functors-are}

The goal of this section is to prove that the monic functors are
representable (Theorem \ref{thm:theMonicFuctorIsrepresentable}).

\begin{pro}  \label{pro:universalityForMonicFamilies} Let $<$ be a total
  order refining $<_\xi$.
  Let $I\subset B[\bfx]$ be a bounded ideal with $B[\bfx]/I$ a locally
  free $B$-module of rank $n$,
  and let $\mathcal{I} \subset \mathcal{O}_X[\bfx]$ be the ideal sheaf on $X := {\rm Spec}\,B$ defined by $I$.
  Let $\Delta$ be a standard set of cardinality $n$. There exists a locally
  closed subscheme $Z \subset X$ such that
  \begin{itemize}
  \item the restriction of $\mathcal{I}$ to $Z$ is a bounded $\Delta$-monic family,
  \item any morphism $f:{\rm Spec}\,A \rightarrow {\rm Spec}\, B$ such that $IA[\bfx]$ is
    a bounded $\Delta$-monic family factors through $Z$.
  \end{itemize}
\end{pro}

Proposition \ref{pro:universalityForMonicFamilies}
is a particular case of Proposition
\ref{inductionForRepresentingMonicFamilies}, which we shall prove by induction.

\begin{lmm}\label{lm_hypecube}
   Let $I\subset B[\bfx]$ be a bounded ideal  for the order $<_\xi$
   such that $B[\bfx]/I$ is a finite $B$-module.
 Let $C:=\{x^e \mid e_i <r_i\}$ be a ``hypercuboid of monomials'' with
  edges of lengths $r_i$.
  Then it is possible to choose the  integers $r_1,\ldots,r_d$ such that:
  \begin{itemize}
  \item the monomials in $C$ generate  $B[\bfx]/I$ and $B[\bfx,t]/I_\xi[t]$
  \item if $m$ is a monomial with $m\notin C$, there exists $f\in I$  with
    $f=m+\sum_{e\in C} c_ex^e$, and  $\IN_{<_\xi}(f)=m$ .
  \end{itemize}
\end{lmm}
\begin{proof}
  Since $I$ is bounded, if $x_i$ is non-positive, we can choose $r_i$
  such that $x_i^{r_i}\in I$. Let $S$ be a set of monomials generators of
  $B[\bfx]/I$. We may suppose that  for every $m$ in $S$ and every non
  positive variable $x_i$, the exponent of $x_i$ in $m$ is  smaller than $r_i$.
  For $x_i$ positive, we can choose  $r_i$ large,  such that
  any monomial $m$ multiple of $x_i^{r_i}$ whose exponent in any
  non-positive variable $x_j$ is less than $r_j$ satisfies $m>_\xi s$
  for any $s\in S$. Moreover, we may choose $r_i$ larger than the
  exponent of $x_i$ in any monomial of $S$.
  The monomials in $C$ generate $B[\bfx]/I$ as
$C\supset S$.

  Let $m\notin C$ be a monomial.  Then $m$ is a multiple of
  some $x_i^{r_i}$. If one can take $x_i$ non-positive, $m\in I$  and we take $f=m$.
If not,  $x_i$ is positive and the decomposition in $B[\bfx]/I$ of  $m$
  on $S$ yields an expression $f=m-\sum _{s\in
    S}c_s\bfx^s$ in $I$ with $\IN_{<_\xi}(f)=m$.

  Then $I_\xi[t]$ contains the elements
  $t^{f_\xi(m)}(t\cdot f)=m-\sum _{s\in
    S}t^{f_\xi(m)-f_\xi(s)}c_s\bfx^s$. It
  follows that the quotient $B[t,\bfx]/I_\xi[t]$ is a finite
  $B[t]$-module generated by the monomials in $C$.
\end{proof}

\begin{pro}  \label{inductionForRepresentingMonicFamilies} Let $<$ be
  a total order refining $<_\xi$.
  Let $I\subset B[\bfx]$ be a bounded ideal 
  such that $B[\bfx]/I$ is a locally free $B$-module of rank $n$,
  and let $\mathcal{I} \subset \mathcal{O}_X[\bfx]$ be the ideal sheaf
  on $X := {\rm Spec}\,B$ defined by $I$.
  Let $C:=\{x^e \mid e_i <r_i\}$ be a ``hypercuboid of monomials'' of
  edge lengths $r_i$ and therefore, of cardinality $s:=\prod_{i=1}^d
  r_i$, satisfying the conditions of Lemma \ref{lm_hypecube}.
  We number the monomials $m_i\in C$ such that
  $m_1>m_2>\dots>m_s$. Let $r\leq s$, and fix a map
  $\mu:\{1,\ldots,r\}\rightarrow \{0,1\}$.
  Then there exists a locally
  closed subscheme $Z_r\subset X$ $($possibly empty$)$ such that
  \begin{itemize}
  \item The sheaf of ideals $\mathcal{I}_r$, which we define as the restriction of $\mathcal{I}$ to $Z_r$,
    is a bounded monic family
    with $\IN(\mathcal{I}_r)_{m}=\langle 1 \rangle$ for $m\notin C$ and
    $\IN(\mathcal{I}_r)_{m_i}=\langle \mu(i) \rangle$ for $1\leq i\leq r$.
  \item Let $f:{\rm Spec}\, A \rightarrow {\rm Spec}\, B$ be a morphism and
    $K:=IA[\bfx]$. Then $\IN(K)_{m}= \langle 1 \rangle$ for $m\notin C$ and
    we have $\IN( K)_{m_i}=\langle \mu(i)  \rangle$ for $1\leq i\leq r$ if, and only if,
    $f$ factors through $Z_r$.
  \end{itemize}
\end{pro}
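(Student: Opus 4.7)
The proof proceeds by induction on $r \in \{0, 1, \dots, s\}$, constructing a descending chain $X = Z_0 \supseteq Z_1 \supseteq \cdots$ of locally closed subschemes. For the base case $r = 0$, set $Z_0 := X$, so that there is nothing to check about the monomials $m_1, \ldots, m_0$; the only remaining condition is $\IN_m(\mathcal{I}) = \mathcal{O}_X$ for $m \notin C$. If $m \notin C$, then $m$ is divisible by $x_i^{r_i}$ for some $i$, and by boundedness there exists $h_i \in I$ with $\IN(h_i) = x_i^{r_i}$; multiplying $h_i$ by $m / x_i^{r_i}$ gives an element of $I$ with leading coefficient $1$ and initial monomial $m$, so $\IN_m(I) = \langle 1 \rangle$.

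For the inductive step, assume $Z_{r-1}$ has been constructed. Let $\mathcal{I}_{r-1}$ denote the restriction of $\mathcal{I}$ to $Z_{r-1}$, and form the sheaf of ideals $\mathcal{J} := \IN_{m_r}(\mathcal{I}_{r-1}) \subset \mathcal{O}_{Z_{r-1}}$. Define
\begin{displaymath}
  Z_r := \begin{cases}
    V(\mathcal{J}) & \text{if } \mu(r) = 0, \\
    Z_{r-1} \setminus V(\mathcal{J}) & \text{if } \mu(r) = 1.
  \end{cases}
\end{displaymath}
In either case $Z_r$ is a locally closed subscheme of $Z_{r-1}$, and a locally closed subscheme of a locally closed subscheme of $X$ is locally closed in $X$.

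The crux of the argument, and the main technical hurdle, is showing that $\IN_{m_r}$ commutes with \emph{arbitrary} base change from $Z_{r-1}$. Proposition~\ref{refinedStatementForMonicBaseChanges} is phrased with a hypothesis on all $m' \geq m_r$, but a careful reading of its proof reveals that the argument invokes the hypothesis only for $m' > m_r$ and places no constraint on $\IN_{m_r}$ itself: in a presentation $f = \sum a_i f_i$ with minimal maximal monomial $m'$, either $m' = m_r$, in which case the $m_r$-coefficient of $f$ automatically lies in $\mathcal{J} \otimes A$, or $m' > m_r$, in which case the inductive hypothesis $\IN_{m'}(\mathcal{I}_{r-1}) = \mathcal{O}_{Z_{r-1}}$ allows reduction and contradicts minimality. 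Hence $\IN_{m_r}(\mathcal{I}_{r-1} \otimes A) = \mathcal{J} \otimes A$ for every $\mathcal{O}_{Z_{r-1}}$-algebra $A$.

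Granted this base-change compatibility, $\IN_{m_r}(\mathcal{I}|_{Z_r})$ equals $0$ on $V(\mathcal{J})$ and $\mathcal{O}_{Z_r}$ on $Z_{r-1} \setminus V(\mathcal{J})$, as required, while the conditions for $m \notin C$ and for $m_1, \dots, m_{r-1}$ carry over from $Z_{r-1}$ to $Z_r$ trivially. For the universal property, a morphism $f : {\rm Spec}\,A \to X$ factors through $Z_r$ iff it factors through $Z_{r-1}$ and in addition $f^* \mathcal{J} = 0$, resp.\ $f^* \mathcal{J} = A$. By induction, factoring through $Z_{r-1}$ is equivalent to the inductive conditions on $IA[\bfx]$; by the base-change equality, the additional condition is precisely $\IN_{m_r}(IA[\bfx]) = \langle \mu(r) \rangle$, completing the induction.
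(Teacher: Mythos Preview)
Your proof is correct and follows the same inductive construction as the paper: set $Z_0=X$, and build $Z_r$ from $Z_{r-1}$ as either the vanishing locus of $\mathcal{J}=\IN_{m_r}(\mathcal{I}_{r-1})$ or its open complement, according to $\mu(r)$. The one substantive difference is organizational. The paper treats the cases $\mu(r)=0$ and $\mu(r)=1$ separately: for $\mu(r)=1$ it invokes flatness of the open immersion $Z_r\hookrightarrow Z_{r-1}$, and for $\mu(r)=0$ it runs a direct lift-and-minimize contradiction argument (choose $g\in I_{r-1}$ restricting to $f$, minimize $\IN(g)$, and cancel top monomials until the leading coefficient lies in $\mathcal{J}$). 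You instead extract a single strengthened base-change statement---that $\IN_{m_r}(\mathcal{I}_{r-1}\otimes A)=\mathcal{J}\cdot A$ holds for \emph{any} $\mathcal{O}_{Z_{r-1}}$-algebra $A$, with no hypothesis on $\IN_{m_r}$ itself---and derive both cases and both bulleted items uniformly from it. Your sketch of why that strengthening holds is correct (if the minimal maximal monomial $m'$ in a presentation exceeds $m_r$, then $m'$ is necessarily the leading monomial of some $f_i$, so $\IN_{m'}(\mathcal{I}_{r-1})\neq 0$, hence $=\langle 1\rangle$, and one reduces); it is exactly the argument inside the paper's $\mu(r)=0$ case, just isolated as a lemma. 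The payoff is a cleaner universal-property verification, avoiding the paper's pointwise argument via $k(p)$ in the $\mu(r)=1$ case.
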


\begin{proof}
  We start with the first item, which we prove by induction on $r\geq 0$.

  When $r=0$, we may take $Z_0=X$ since the condition
  $\IN(\mathcal{I}_r)_m=\langle 1 \rangle$ for $m\notin C$ is true by
  construction of $C$. We may assume that
  $Z_{r-1}$ is adequately defined.
  Let $F_r\subset Z_{r-1}$ the closed subscheme
  defined by the sheaf of ideals
  $\mathcal{I}(F_r):=\IN(\mathcal{I}_{r-1})_{m_r}$. Let $O_r:=Z_{r-1}\setminus F_r$.
  We define
  \begin{displaymath}
    Z_r:=\begin{cases}
      F_r & \text{ if } \mu(r)= 0 \\
      O_r & \text{ otherwise}.
    \end{cases}
  \end{displaymath}

  By Proposition \ref{refinedStatementForMonicBaseChanges} and the
  induction hypothesis, we have
  $\IN(\mathcal{I}_r)_{m}=\langle 1 \rangle$ for $m\notin C$ and
  $\IN(\mathcal{I}_r)_{m_i}=\langle \mu(i) \rangle$ for $1\leq i\leq r-1$.
  We have to prove that $\IN(\mathcal{I}_r)_{m_r}=\langle \mu(r) \rangle$.
  This is a local problem, so we may assume that $Z_{r-1} \subset X$ is a closed subscheme defined by an
  ideal $J_{r-1}\subset B$. If $\mu(r)=1$, the base change $Z_r
  \hookrightarrow Z_{r-1}$ is open, thus flat. Proposition \ref{baseChangeBGS}  therefore shows that
  $\IN(\mathcal{I}_{r})_{m_r}=\langle 1 \rangle$ on a neighborhood of any $p\in Z_r$.

  Assume now that $\mu(r)=0$.
  We claim that $\IN(\mathcal{I}_r)_{m_r}=0$. The problem is
  local, so we may assume that both $Z_r$ and $Z_{r-1}$ are affine.
  Accordingly, we replace the sheaves $\mathcal{I}_r$ and $\mathcal{I}_{r-1}$
  by their respective ideals of global sections,
  which we denote by $I_r$ and $I_{r-1}$, respectively.
  We will argue by contradiction, supposing that
  there exists some $f\in I_r$ with $\IN(f)=cm_r$, $c\neq 0$. Take some $g\in I_{r-1}$
  that restricts to $f$ over $Z_r$. Using Lemma \ref{lm_hypecube},
  we may assume that both $f$ and $g$
  are linear combinations of monomials in $C$, $f=\sum_{m_i\in C} a_i m_i$ and
  $g=\sum_{m_i\in C} b_i m_i$, resp.
  Among all possible $g$, choose one which minimizes $\IN(g)$. Then
  $\IN(g)=dm$ with $m\geq m_r$. Suppose that $m>m_r$. Since
  $\IN(I_{r-1})_m= \langle 0 \rangle$ or $\langle 1 \rangle$ by induction hypothesis and since
  $d\neq 0$, we obtain that $\IN(I_{r-1})_m= \langle 1 \rangle$. Choose  $h\in I_{r-1}$
  with $\IN(h)=m$. Then $g' := g-dh$
  contradicts the minimality of $g$. Thus $m=m_r$, and $d\in
  (I_{r-1})_{m_r}=I(F_r)$ vanishes on $F_r$. Since $d$ restricts to $c$ on
  $F_r$, it follows that $c=0$. This is a contradiction,
  which finishes the proof of $\IN(\mathcal{I}_r)_{m_r}=0$.

  We now come to the second point. If $f$ factors through $Z_r$,
  then $\IN(K)_{m}= \langle 1 \rangle$ for $m\notin C$ and
  $\IN(K)_{m_i}= \langle \mu(i) \rangle$ for $1\leq i\leq r$, as these properties
  are inherited from $Z_r$ by Proposition
  \ref{refinedStatementForMonicBaseChanges}.

  If, on the other hand, $f$ does not factor through $Z_r$,
  then we want to prove that $\IN(K)_{m_i}\neq \langle \mu(i) \rangle$ for some
  $i$, $1\leq i\leq r$. We may assume that $f$ factors through
  $Z_{r-1}$, since in the complementary case, we are done by induction.

  We first consider the case $\mu(r)=0$. By the factorization property of $f$
  through $Z_{r-1}$,
  we may assume that $B$ is the coordinate ring of the scheme $Z_{r-1}$. We denote by
  $f^\#:B\rightarrow A$ the morphism associated with $f$.
  Since $f$ does not factor through $Z_r$, there exists some $g\in
  I_{r-1}$ with $\IN(g)=cm_r$, $ f^\#(c)\neq 0$. The pullback of $g$ to
  $A[\bfx]$ shows that $\IN(IA[\bfx])_{m_r}\neq \langle 0 \rangle$, and we are done.

  Now consider the case $\mu(r)=1$. Since $Z_{r}\subset Z_{r-1}$ is open in
  this case, the factorization property of $f$ implies the existence of
  a point $p\in {\rm Spec}\,A$ with $f(p)\in Z_{r-1}$ and $f(p)\notin
  Z_r=Z_{r-1}\setminus F_r$. In particular, $\IN(I \cdot
  k(p)[\bfx])_{m_r}=0$ by Proposition   \ref{refinedStatementForMonicBaseChanges}.
  Thus $\IN(K)_{m_r}\neq \langle 1 \rangle$, as expected, since otherwise we would have
  $\IN(I \cdot k(p)[\bfx])_{m_r}= \langle 1 \rangle$ by
  Proposition \ref{refinedStatementForMonicBaseChanges}.
\end{proof}

\begin{rmk} It is natural to try to formulate the last proposition in
  terms of commutative algebra, without sheafs of ideals. However,
  the subscheme $Z_r$ is in general not an affine scheme. For
  instance, suppose that $k$ is an algebraically closed field,
  let $B=k[a,b]$, $I=I(a,b)=\langle x_1^3,x_2+ax_1+bx_1^2 \rangle$ and choose the
  order such that $x_1>>x_2$. Then for every closed point $(a_0,b_0)\neq (0,0)$, the ideal
  $I(a_0,b_0)$ contains an element with initial term $x_1^2$. Since
  $I(0,0)=\langle x_2,x_1^3 \rangle$, it follows that the locus where $I$ admits $x_1^2$
  as an initial term is the complement of the origin in the plane,
  which is not affine.
\end{rmk}

\begin{pro}\label{pro:uniformBoundingForMonic}
  Let $<$ be a total order refining $<_\xi$,
  Let $I\subset B[\bfx]$ be an  ideal in $\mch^{{\rm  mon}(<,\Delta)}(B)$, then for every non-positive variable $x_i$,
  we have $x_i^n\in I$ $($with $n=\#\Delta)$.
\end{pro}
\begin{proof}
  By definition, $I$ is bounded and $\IN_<(I)=I^\Delta$. Then we can run
  exactly the same proofs as in Lemma \ref{lm:cobase}
  and Proposition \ref{propSecondFinitnessLemma}. In these statements,
  the order is partial, but we can a fortiori consider the proof for a
  total order.
\end{proof}

\begin{thm}\label{thm:theMonicFuctorIsrepresentable}
  Let $<$ be a total order refining $<_\xi$.
  Then the monic functor $\mch^{{\rm mon}(<,\Delta)}$ is representable by a locally closed
  scheme $H^{{\rm mon}(<,\Delta)}$ of $H^n(\mathbb{A}^d)$, where $n=\#\Delta$.
\end{thm}
\begin{proof}
  Let $I\subset B[\bfx]$ be an ideal defining a flat family
  ${\rm Spec}\,B[\bfx]/I \to {\rm Spec}\,B$
  of relative length $n$ with
  $I$ bounded and  $\Delta$-monic. Let $L_i\subset H^n(\mathbb{A}^d)$
  be the closed subscheme of $H^n(\mathbb{A}^d)$ parametrizing the subschemes
  $Z$ included in the subscheme $\{x_i^n=0\} \subset \mathbb{A}^d$.
  Let $L := \cap_{x_i\text{ negative }}  L_i$. Since the universal ideal of the Hilbert
  scheme (see \cite{lederer} for the construction and properties of that universal ideal)
  is bounded over $L$, there is a locally closed subscheme $L_\Delta\subset L$
  parametrizing $\Delta$-monic ideals (Proposition
  \ref{pro:universalityForMonicFamilies}). By
  the universal property of the Hilbert scheme, the ideal $I$ corresponds to a unique morphism
  $\phi:{\rm Spec}\,B\to H^n(\mathbb{A}^d)$.
  Proposition \ref{pro:uniformBoundingForMonic} implies that
  the morphism $\phi$ factors through $L$.
  Proposition \ref{pro:universalityForMonicFamilies}
  (the universal property of $L_\Delta$)
  implies that $\phi$ even factors through $L_\Delta$. Conversely, any morphism
  ${\rm Spec}\,B\rightarrow L_\Delta$ yields a $\Delta$-monic bounded ideal by
  pullback of the universal ideal over the Hilbert scheme.
  Upon defining $H^{{\rm mon}(<,\Delta)}:=L_\Delta$, we thus get the required result.
\end{proof}

\section{Bia\l ynicki-Birula functors are representable}
\label{sec:bialyn-birula-funct-1}

The goal of this section is to prove that Bia\l ynicki-Birula functors
are representable.

So far, we have identified Bia\l ynicki-Birula families with families
  with an appropriate initial ideal for a \textit{partial} order. On
  the other hand, we have proved that functors of families with a
  prescribed initial ideal for a \textit{total} order are representable.
  To conclude, we will prove that having an initial ideal for a
  partial order is equivalent to having an initial ideal for two
  ad hoc total orders. The representability of the Bia\l ynicki-Birula
  functors will follow.

  The \emph{total} orders that we need are introduced in the
  following definition. We call them signed orders.
  They refine the partial orders $<_\xi$ used  in Section
  \ref{sec:bialyn-birula-funct}. Like the orders $<_\xi$, they are not
  monomial orders in the sense of \cite[Chapter~15]{eisenbud} if
  some components of $\xi$ are negative.



\begin{dfn}
 Suppose given  a map
  $\epsilon:\{1,\ldots,d\}\rightarrow \{-1,1\}$. The partial order $<_\xi$ on the monomials
  can be refined to a total order $<$ as follows: For all monomials  $\bfx^e$ and $\bfx^f$ with
    $f_\xi(e)=f_\xi(f)$, we have
    $\bfx^e<\bfx^f$ if, and only if,
    $(\epsilon(1)e_{1},\ldots,\epsilon(d)e_{d})<(\epsilon(1)f_{1},
    \ldots,\epsilon(d)f_{d})$
    in the lexicographic order.

  We call such an order $<$ a signed order refining $<_{\xi}$.
\end{dfn}

\begin{rmk}
Special cases of signed orders are implicit in    \cite{esBetti,esCells,grojnowski,nakajima},
for studying the Hilbert scheme of points in the two-dimensional
case. 
\end{rmk}

\begin{pro} \label{BBFunctor=intersectionOfTwoMonicFunctors}
  Let $\xi\in \ZZ^d$. Let $<_+$  be a signed order which refines
  $<_\xi$ and $\epsilon$ the function that defines $<_+$.
  Let $<_-$ be the ``opposite'' signed order, \textit{i.e.} the signed order
  defined by the opposite function $-\epsilon$.
  Then $\mch^{BB(\Delta,\xi)}=\mch^{{\rm mon}(<_+,\Delta)}\cap \mch^{{\rm mon}(<_-,\Delta)}$.
  \end{pro}

  \begin{proof}
  It is clear that $\mch^{BB(\Delta,\xi)}\subset \mch^{{\rm mon}(<_+,\Delta)}\cap
  \mch^{{\rm mon}(<_-,\Delta)}$ since by Proposition
  \ref{reformulationBB2Monic},
 $\mch^{BB(\Delta,\xi)}\subset
  \mch^{{\rm mon}(<,\Delta)}$ for any refinement $<$ of $<_\xi$.
  Conversely, take $I\in \mch^{{\rm mon}(<_+,\Delta)}(B) \cap
  \mch^{{\rm mon}(<_-,\Delta)}(B)$. For proving that $I\in
  \mch^{BB(\Delta,\xi)}(B)$, we first
  prove that $\IN_{<_\xi,_m}(I)=\langle 1 \rangle$ for $m\notin
  \Delta$. This will imply by Proposition \ref{limitAndOrder}
  that a monomial $m\notin \Delta$ satisfies $m\in I_\xi[0]$.

  We argue by contradiction. Suppose that the set $C:=\left\{m\notin \Delta\mid \IN_{<_\xi,m}(I)\neq \langle 1 \rangle\right\}$ is non empty. Since $I$ is
  bounded, $C$ is finite
  and  the quotient $B[t,\bfx]/I_\xi[t]$ is a finite $B[t]$-module,
  by Lemma \ref{lm_hypecube}. Let $m'$ be the smallest element of $C$ for the
  order $<_+$. Since $m'\notin \Delta$, there is an $f\in I$
  with $\IN_{<_+}(f)=m'$. We write $f=m'+r+s+t$, with
  $$r=\sum_{f_\xi(e)<f_\xi(m')} c_e\bfx^e,\quad s=\sum_{\substack{e\in \Delta\\ f_\xi(e)=f_\xi(m') \\ e<_+m'}} c_e\bfx^e,\quad\mathrm{and}\  t=\sum_{\substack{e\notin \Delta\\ f_\xi(e)=f_\xi(m')\\ e<_+m'}} c_e\bfx^e.$$
  By minimality of $m'$, for any term $c_e\bfx^e$ in $t$, there exists some $g_e \in I$ of shape
  $g_e=\bfx^e+\sum_{f_\xi(f)<f_\xi(e)}d_f\bfx^f$.

  Let
  \begin{displaymath}
    h:=f-\sum_{c_e\bfx^e \text{ a term of } t}c_eg_e.
  \end{displaymath}
  Then $h$ reads $h=m'+r+s+u$ with
  $$u=\sum_{\substack{e\notin \Delta\\ f_\xi(e)=f_\xi(m')\\ e<_+m'}} c_e(\bfx^e-g_e).$$
  The only terms $c_e\bfx^e$ in $h$ with $f_\xi(e)=f_\xi(m')$
  are the terms in $s$. Note that the orders $<_+$ and $<_-$ have
  been chosen such that any pair of monomials $m,m'$ satisfies the following:
  \begin{itemize}
    \item if $f_\xi(m)\neq f_\xi(m')$, then $m<_+m' \Leftrightarrow m<_-m'$
    \item if $f_\xi(m)=f_\xi(m')$, then $m<_+m' \Leftrightarrow m'<_-m$.
  \end{itemize}
  Thus, if $s\neq 0$, then $\IN_{<_-}(h)$ is a term $c_e\bfx^e$ with
  $e\in \Delta$, contradicting the assumption that $I\in\mch^{{\rm mon}(<_-,\Delta)}(B)$.
  We thus obtain that $s=0$ and $\IN_{<_\xi}(h)=m'$, contradicting the
  definition of $C$.

  Summing things up, ${\rm Spec}\, B[t,\bfx]/I_\xi[t]$ is a finite family over ${\rm Spec}\,
  B[t]$. By construction, this is a flat family of relative length $n$ over the
  open set $t\neq 0$. The fiber $ B[\bfx]/I_\xi[0]$ over $t=0$ is a
  quotient of $ B[\bfx]/I^\Delta$ which is a flat family of
  relative length $n$ over ${\rm Spec}\, B$. It follows by semi-continuity that
  $B[t,\bfx]/I_\xi[t]$ is a locally free $B[t]$-module of rank $n$ with
  $I_\xi[0]=I^\Delta$. Indeed, let $\mathrm{Fitt}_i\subset B[t]$ be the $\supth{i}$
  Fitting ideal of $B[t,\bfx]/I_\xi[t]$. The local freeness of rank
  $n$ of $B[t,\bfx]/I_\xi[t]$ is equivalent to the equalities
  $\mathrm{Fitt}_{n}=B[t]$ and $\mathrm{Fitt}_{n-1}=0$. The equality  $\mathrm{Fitt}_{n}=B[t]$
  is true since the fiber of the family
  has length at most $n$ over each closed point. Since the Fitting
  ideals commute with the base change $B[t]\rightarrow B[t,t^{-1}]$,
  we obtain $\mathrm{Fitt}_{n-1}B[t,t^{-1}]=0$, hence $\mathrm{Fitt}_{n-1}=0$
and $B[t,x]/I_\xi[t]$ has rank $n$.  Since
  a surjective morphism between free modules of the same rank is an
  isomorphism, the equality  $I_\xi[0]=I^\Delta$ follows.
\end{proof}

\begin{thm} \label{thm:BBrepresentable}
  The Bia\l ynicki-Birula
functor  $\mch^{BB(\Delta,\xi)}$
is representable.
\end{thm}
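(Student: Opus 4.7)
The plan is to combine the two main results developed earlier in the section: Proposition \ref{BBFunctor=intersectionOfTwoMonicFunctors}, which realizes the Bia\l ynicki-Birula functor as an intersection of two monic functors, and Theorem \ref{cor:theMonicFuctorIsrepresentable}, which asserts the representability of each monic functor by a locally closed subscheme of $H^n(\mathbb{A}^d)$. Concretely, I would first pick a sequence $\delta_j \to 0$ in $\RR^d$ such that both $<_{\xi+\delta_j}$ and $<_{\xi-\delta_j}$ are total orders, and such that they converge to refinements $<_+$ and $<_-$ of the partial order $<_\xi$ satisfying the hypothesis of Proposition \ref{BBFunctor=intersectionOfTwoMonicFunctors}. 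A suitable $\delta_j$ is easy to produce: fix any signed order compatible with $\xi$ and take $\delta_j$ compatible with it; the ``opposite'' choice on the kernel coordinates of $f_\xi$ yields the refinement playing the role of $<_-$.

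With these choices, Proposition \ref{BBFunctor=intersectionOfTwoMonicFunctors} gives the functorial identification
$$\mch^{BB(\Delta,\xi)} \;=\; \mch^{\rm mon(<_+,\Delta)} \cap \mch^{\rm mon(<_-,\Delta)}.$$
Next I would invoke Theorem \ref{cor:theMonicFuctorIsrepresentable} twice, obtaining two locally closed subschemes $H_\pm := H^{\rm mon(<_\pm,\Delta)}$ of $H^n(\mathbb{A}^d)$ which represent the two factors on the right. The scheme-theoretic intersection $H_+ \cap H_-$, formed as the fiber product $H_+ \times_{H^n(\mathbb{A}^d)} H_-$, is then itself a locally closed subscheme of $H^n(\mathbb{A}^d)$.

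Finally I would verify that $H_+ \cap H_-$ represents the intersection functor. This is a standard diagram chase: a morphism ${\rm Spec}\,B \to H^n(\mathbb{A}^d)$ factors through $H_+ \cap H_-$ if and only if it factors through both $H_+$ and $H_-$, and by the universal properties of the monic functors this happens precisely when the corresponding ideal $I \subset B[\bfx]$ lies in $\mch^{\rm mon(<_+,\Delta)}(B) \cap \mch^{\rm mon(<_-,\Delta)}(B)$, i.e., in $\mch^{BB(\Delta,\xi)}(B)$.

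I expect no real obstacle here, since all the substantive work has already been absorbed into Proposition \ref{BBFunctor=intersectionOfTwoMonicFunctors} and Theorem \ref{cor:theMonicFuctorIsrepresentable}. The only point requiring care is the initial existence of the two refinements $<_+, <_-$ of $<_\xi$; but this is handled by the signed order formalism and the freedom to flip signs on zero-weight coordinates. The conceptually interesting feature of the argument is that even though the Bia\l ynicki-Birula functor is a priori an intersection of infinitely many monic subfunctors of $H^n(\mathbb{A}^d)$ (one for each total refinement of $<_\xi$), Proposition \ref{BBFunctor=intersectionOfTwoMonicFunctors} reduces this to an intersection of just two, sidestepping any issue about intersecting infinitely many locally closed subschemes.
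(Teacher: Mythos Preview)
Your proposal is correct and follows exactly the same approach as the paper: invoke Proposition~\ref{BBFunctor=intersectionOfTwoMonicFunctors} to write $\mch^{BB(\Delta,\xi)}$ as the intersection of two monic functors, then use Theorem~\ref{cor:theMonicFuctorIsrepresentable} to represent each by a locally closed subscheme of $H^n(\mathbb{A}^d)$ and take their schematic intersection. The paper's own proof is in fact shorter than yours, omitting the explicit construction of $\delta_j$ and the fiber-product verification as routine.
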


\begin{proof}
  By the above, the Bia\l ynicki-Birula  functor is an intersection of two functors
  $\mch^{{\rm mon}(<_+,\Delta)}$ and $\mch^{{\rm mon}(<_-,\Delta)}$,
  both representable by locally closed subschemes
  $H^{{\rm mon}(<_+,\Delta)}$ and $H^{{\rm mon}(<_-,\Delta)}$, respectively,
  of the Hilbert scheme. The Bia\l ynicki-Birula functor is therefore
  representable by the schematic intersection $ H^{{\rm mon}(<_+,\Delta)}\cap
  H^{{\rm mon}(<_-,\Delta)}$.
\end{proof}

\subsection{Signed orders are limit orders}
\label{sec:signed-orders-are}
The results of this section are not directly used in the proof of our
main results. However, they give some intuition on the role of
the signed orders which may appear unnatural at first glance:
a signed order $<$ is obtained from $<_\xi$ with
an infinitesimal deformation of $\xi$,
and the monic functors are stable under infinitesimal deformations.
These facts explain our strategy to replace $<_\xi$ with a signed order $<$
when studying the representability of monic functors.


  \begin{dfn}
  \leavevmode
  \begin{itemize}
    \item A sequence of partial orders $<_j$ \emph{converges to the total order $<$} if
    for every pair of monomials $a,b$, we have $a<b$ if, and only if, $a<_j b$ for $j$ large enough.
    \item Let $<$ be a signed order defined by the function $\epsilon$.  A
      \emph{sequence compatible with $\epsilon$ }
    is a sequence $\xi^j$ in $\RR^d$ converging to $0$ such that the sign of
    $\xi^j_k$ is $\epsilon(k)$ and such that the quotient
    $\xi^j_l/\xi^j_k$ tends to $0$ if, and only if, $k<l$.
  \end{itemize}
  \end{dfn}


The connection between signed orders and convergence is given by the next proposition (the proof of which, being easy, is left to the reader).

  \begin{pro}
    Let $<$ be  a refinement of the order
    $<_\xi$. Then the following
    conditions are equivalent:
    \begin{itemize}
    \item The order $<$ is the signed order defined by the function $\epsilon$
    \item For every sequence $\xi^j$ compatible with $\epsilon$,
      the sequence of orders $<_{\xi+\xi^j}$ converges to $<$.
    \end{itemize}
  \end{pro}

  The following proposition tells that
  monic functors are stable under small
  deformations of the order as long as
  we consider families parameterized by noetherian rings.

\begin{pro} \label{limitsForMonomialFunctors}
  Let $<,<_j$ be total orders refining $<_\xi$. We suppose  that the
  sequence of orders $<_j$ converges
  to  $<$. Then for $j$ large, the functors
  $\mch^{\rm mon(<_j,\Delta)}_{\mathrm{noeth}}$ and $\mch^{\rm mon(<,\Delta)}_{\mathrm{noeth}}$ are isomorphic.
\end{pro}
  \begin{proof}
    This is Proposition 22 in the first arxiv version of this paper \cite{firstVersion}, where
    noetherianity is always assumed. The proof is thus omitted.
  \end{proof}

\section{Bia\l ynicki-Birula schemes and Hilbert-Chow morphisms}
\label{sec:bialyn-birula-strata}

The goal of this section is to prove the following theorem:

\begin{thm} \label{thm:HilbertChow}
  If $\xi_i\leq 0$, then $H^{BB(\xi,\Delta)}$ is schematically
  included in the fiber over the origin $\rho_i^{-1}(0)$, where
   $\rho_i: H^n(\mathbb{A}^d)\rightarrow {\rm Sym}^n(\mathbb{A}^1)$
  is the Hilbert-Chow morphism associated with the $\supth{i}$
  coordinate.
\end{thm}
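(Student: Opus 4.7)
The plan is to work at the level of functors: the composition $\rho_i \circ i$ corresponds, for each $B$-point $I \in \mch^{BB(\xi,\Delta)}(B)$, to the $n$-tuple of elementary symmetric polynomials $(e_1,\dots,e_n) \in B^n$ of multiplication by $x_i$ on the locally free rank-$n$ $B$-module $B[\bfx]/I$. Schematic inclusion in $\rho_i^{-1}(O)$ is therefore equivalent to $e_k = 0$ in $B$ for every $k \geq 1$ and every such $I$. To pin down these coefficients, I would enlarge the picture to the BB family $B[t,\bfx]/I(t)$ over $B[t]$ and work with the symmetric polynomials $e_k(t) \in B[t]$ of multiplication by $x_i$ on that family; by construction $e_k(1) = e_k$.

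Next I would compute $e_k(t)$ at the two distinguished values of $t$. At $t = 0$, the fiber is the monomial quotient $B[\bfx]/I^\Delta$, $B$-free on the basis $\{\bfx^e : e \in \Delta\}$. Multiplication by $x_i$ sends $\bfx^e$ to $\bfx^{e+e_i}$ if $e+e_i \in \Delta$ and to $0$ otherwise. Ordering the basis by increasing $i$-th coordinate of $e$ renders this operator strictly upper triangular, so its characteristic polynomial is $s^n$ and $e_k(0) = 0$ for every $k \geq 1$.

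Over the open locus $t \neq 0$, torus equivariance gives the identity $e_k(t) = t^{k\xi_i}\, e_k$ in $B[t,t^{-1}]$. I would derive this from the definition of $I(t,t^{-1})$: the $B[t,t^{-1}]$-algebra automorphism $\sigma$ of $B[t,t^{-1},\bfx]$ sending $x_j$ to $t^{-\xi_j} x_j$ carries $I \cdot B[t,t^{-1},\bfx]$ onto $I(t,t^{-1})$, inducing an isomorphism $B[t,t^{-1}] \otimes_B B[\bfx]/I \xrightarrow{\sim} B[t,t^{-1},\bfx]/I(t,t^{-1})$. Transporting multiplication by $x_i$ on the target back to the source through $\sigma$ yields multiplication by $t^{\xi_i} x_i$ on $B[t,t^{-1}] \otimes_B B[\bfx]/I$, whose characteristic polynomial is $\sum_k (-1)^k t^{k\xi_i} e_k\, s^{n-k}$.

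Combining the two computations finishes the proof. If $\xi_i < 0$, the identity $e_k(t) = t^{k\xi_i} e_k$ in $B[t,t^{-1}]$ has a genuinely negative power of $t$ on the right unless $e_k = 0$; since $e_k(t) \in B[t]$ contains no negative powers of $t$, this forces $e_k = 0$. If $\xi_i = 0$, the identity collapses to $e_k(t) = e_k$, constant in $t$, and evaluation at $t = 0$ combined with the first step gives $e_k = e_k(0) = 0$. I expect the main obstacle to be the bookkeeping needed to establish the torus equivariance identity cleanly — verifying that $\sigma$ sends $I$ to $I(t,t^{-1})$ with the paper's sign conventions, and that it intertwines multiplication by $x_i$ on the target with multiplication by $t^{\xi_i} x_i$ on the source. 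Once that is in place, the rest is linear algebra on a single rank-$n$ module.
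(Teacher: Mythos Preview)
Your argument is correct and takes a genuinely different route from the paper's. The paper first reduces (via proposition \ref{BBFunctor=intersectionOfTwoMonicFunctors}) to proving the inclusion for the monic scheme $H^{{\rm mon}(<,\Delta)}$ with $x_i$ a negative variable, then proves a lemma showing that multiplication by $x_i$ on $B[\bfx]/I$ is strictly lower triangular in the basis $\{\bfx^e:e\in\Delta\}$ ordered by $<$, and finally feeds this into Iversen's linearized-determinant description of $\rho_i$ to kill the relevant symmetric functions. Your approach bypasses the monic reduction entirely: you stay with the original BB family $B[t,\bfx]/I(t)$, read off $e_k(0)=0$ from nilpotence of $x_i$ on the monomial fiber, and use torus equivariance over $B[t,t^{-1}]$ to force $e_k(t)=t^{k\xi_i}e_k$; the membership $e_k(t)\in B[t]$ then does the rest. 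This is arguably more direct and conceptually cleaner for the stated theorem. What the paper's route buys is a strictly stronger intermediate statement --- the inclusion $H^{{\rm mon}(<,\Delta)}\subset\rho_i^{-1}(O)$ for every negative $x_i$ --- together with an explicit triangular form for the multiplication operator, which may be of independent use.
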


To be precise, let $\rho:H^n(\mathbb{A}^d)\rightarrow {\rm Sym}^n(\mathbb{A}^d)$ be the
usual Hilbert-Chow morphism. The projection $p_i:\mathbb{A}^d \rightarrow
\mathbb{A}^1$ to the $\supth{i}$-coordinate
induces a morphism $p_i^n:{\rm Sym}^n(\mathbb{A}^d)
\rightarrow {\rm Sym}^n(\mathbb{A}^1)$. Recall that ${\rm
  Sym}^n(\mathbb{A}^1)\simeq \mathbb{A}^n$ via elementary symmetric
functions. We denote by $\rho_i:=p_i^n\circ
\rho$ the Hilbert-Chow morphism associated with the $\supth{i}$
coordinate. We denote by $0\in {\rm Sym}^n(\mathbb{A}^1)$ the point
corresponding to $n$ copies of the origin of $\mathbb{A}^1$.

\begin{lmm}
  Let $I\in \mch^{{\rm BB}(\xi,\Delta)}(B)$. Suppose that $x_i$ is a
  non-positive variable. Let $m_i$ be the multiplication by $x_i$ in $B[\bfx]/I$.
  Then there exists a basis of $B[\bfx]/I$ such that the matrix of $m_i$ is
  strictly lower triangular.
\end{lmm}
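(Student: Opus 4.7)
My plan is to use the monomial basis $\{\bfx^e : e \in \Delta\}$ of $B[\bfx]/I$ and to order its elements $\bfx^{e^{(1)}} > \bfx^{e^{(2)}} > \cdots > \bfx^{e^{(n)}}$ from largest to smallest with respect to $<$. In this ordered basis I will show that $m_i(\bfx^{e^{(j)}})$ is a $B$-linear combination of the $\bfx^{e^{(k)}}$ with $k > j$, which is exactly the strict lower triangularity claim. Note that the monomials $\bfx^e$, $e \in \Delta$, really do form a basis: $B[\bfx]/I$ is $B$-locally free of rank $\#\Delta$ by assumption, and $\Delta$-monicity gives $\IN_{\bfx^f}(I) = 0$ for every $f \in \Delta$, which precludes any nontrivial $B$-linear relation among these monomials modulo $I$.

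A preliminary observation is that, because $x_i$ is a negative variable with respect to $<$, the inequality $x_i\bfx^e < \bfx^e$ holds for every monomial $\bfx^e$. Indeed, either $\xi_i < 0$, so that $f_\xi(x_i\bfx^e) < f_\xi(\bfx^e)$ already at the level of the partial order $<_\xi$; or $\xi_i = 0$ with $\epsilon(i) = -1$ in the signed refinement, in which case the lexicographic rule from Definition~\ref{defn:order} supplies the inequality.

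The essential step is then the following. Fix $e \in \Delta$ and write the residue of $x_i\bfx^e$ in $B[\bfx]/I$ uniquely as $\sum_{f \in \Delta} c_f \bfx^f$. I claim $c_f = 0$ whenever $\bfx^f > x_i\bfx^e$. I argue this by contradiction. Let $S := \{f \in \Delta : c_f \neq 0\}$, a finite set; suppose some $f \in S$ satisfies $\bfx^f > x_i\bfx^e$, and put $g_0 := \max_< S$. The element
\begin{displaymath}
  p \;:=\; x_i\bfx^e \;-\; \sum_{f \in \Delta} c_f \bfx^f
\end{displaymath}
lies in $I$, and by choice of $g_0$ the monomial $\bfx^{g_0}$ strictly dominates every other monomial in the support of $p$, so $\IN_<(p) = -c_{g_0}\bfx^{g_0}$. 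Hence $c_{g_0} \in \IN_{\bfx^{g_0}}(I)$; but $g_0 \in \Delta$ forces $\IN_{\bfx^{g_0}}(I) = 0$ by $\Delta$-monicity, contradicting $g_0 \in S$. Combining the claim with the preliminary observation, every monomial in the support of $m_i(\bfx^{e^{(j)}})$ is strictly smaller than $\bfx^{e^{(j)}}$ in the order, hence equals some $\bfx^{e^{(k)}}$ with $k > j$, as required.

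I do not anticipate any real obstacle here: the argument is a direct consequence of $\Delta$-monicity together with the fact that a negative variable shifts monomials downwards, and it does not even invoke the boundedness hypothesis built into the definition of $\mch^{{\rm mon}(<,\Delta)}$.
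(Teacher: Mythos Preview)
Your proof is correct and follows essentially the same approach as the paper: both use the monomial basis $\{\bfx^e : e\in\Delta\}$ ordered decreasingly by $<$, observe that negativity of $x_i$ forces $x_i\bfx^e<\bfx^e$, and then show that the expansion of $x_i\bfx^e$ in this basis involves only strictly smaller monomials. The paper argues the last point by directly exhibiting, when $x_ib_j\notin\Delta$, an element $f\in I$ with $\IN(f)=x_ib_j$ whose remaining terms lie in $\Delta$ and below $x_ib_j$; you instead start from the unique expansion and derive a contradiction with $\IN_{\bfx^{g_0}}(I)=0$---the same mechanism, just run in the other direction. One small remark: your justification that the $\bfx^e$, $e\in\Delta$, form a basis only establishes linear independence; spanning follows from boundedness (via the division algorithm), which the paper also takes for granted here.
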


\begin{proof}
  We consider any signed order $<$ refining $<_\xi$
  and defined by $(\epsilon,o)$ with $o=\mathrm{Identity}$ and $\epsilon(i)=-1$.
  The monomials $b_i$ with exponents in $\Delta$ are a basis of $B[\bfx]/I$.
  We number them such that $b_1>b_2>\cdots >b_n$.
  Then, if $x_i b_j\in \Delta$, we get $x_ib_j=b_l$, $l>j$.
  If $x_ib_j \notin \Delta$.
  The decomposition of
  $x_ib_j\in B[\bfx]/I$ yields $x_ib_j=\sum_{b_k \in
    \Delta}c_kb_k\ \mod I$. Since $\IN_{<_\xi}(I)=I^\Delta$, $c_k\neq 0$
  implies $b_k<_\xi x_ib_j\leq_\xi b_j$ and $k>j$.
\end{proof}

\begin{proof}[Proof of Theorem \ref{thm:HilbertChow}]

  We recall the observation by Bertin \cite{bertin} that the Hilbert-Chow
  morphism is given by the linearized determinant of Iversen. Let
  $I\in \mch^{{\rm mon}(<,\Delta)}(B)$ and $b_1,\dots, b_n$ a basis
  of $B[\bfx]/I$. If $P\in k[x_i]$, we denote by $C_P^j$ the $j^{th}$
  column of the matrix (with respect to our fixed basis) of multiplication by $P$. If
  ${P_1}\otimes \cdots \otimes {P_n}$ is a pure tensor in
  $k[x_i]^{\otimes n}$, we put ${\rm ld}(P_1\otimes \cdots \otimes
  P_n) := \det(C_{P_1}^1,\ldots,C_{P_n}^n)$. The symmetric group $S_n$ acts
  on $k[x_i]^{\otimes n}$; we denote by $k[x_i]^{(n)}\subset
  k[x_i]^{\otimes n}$ the invariant part. Iversen \cite{iversen} proved that
  ${\rm ld}:k[x_i]^{(n)}\rightarrow B$ is a $k$-algebra homomorphism. As
  was remarked by Bertin, this homomorphism corresponds to the
  Hilbert-Chow morphism $\rho_i$.
  The ideal of the origin is generated by the elementary symmetric
  polynomials, which have degree at least one. For
  proving the theorem, it therefore suffices to show that
  $\det(C_{P_1}^1,\dots,C_{P_n}^n)=0$ if $x_i$ divides some $P_j$.
  According to the lemma above, that determinant is the determinant of
  a lower triangular matrix, and that triangular matrix has a zero
  term on the diagonal if $x_i$ divides some $P_j$.
\end{proof}

\section{Relations to literature}
\label{sec:relationsToLitterature}

After the first version of this paper was posted on the arXiv, Drinfeld
has proved in \cite{drinfeld} results that allow
to retrieve several results of this paper, with
a different approach.

Drinfeld settles his constructions
in the category of algebraic spaces of finite type over a
field. Using Drinfelds's
language (\S\,0.2, \emph{ibid.}), let $H^+$ be the attractor of the Hilbert
scheme of points $H=H^n(\mathbb{A}^d)$, then $H^+$ is represented by a finite type
scheme, and the limit map $q^+:H^+\rightarrow H^T$ is affine (Theorem~1.4.2,
\emph{ibid.}). Theorem~1.4.2 deals with algebraic spaces, but since $H^T$ is a scheme and $q^+$ is  affine, it follows
that $H^+$ is a scheme as well. The functor $H^{BB(\Delta,\xi)}$ is canonically identified with the
fiber $(q^+)^{-1}([Z^{\Delta}])$, hence is representable by an affine
scheme.

The Hilbert-Chow morphism is considered in Theorem
\ref{thm:HilbertChow} under the condition $\xi_i\leq 0$.
In the special case $\xi_i<0$, Drinfeld's results can be applied :
$(\mathrm{Sym}^n(\mathbb{A}^1))^+=\{0\}$ by \cite[\S\,1.3.4]{drinfeld}; it follows by functoriality of
$(-)^+$ that the whole $H^+$ is schematically included in the fiber over
$\{0\}$.

To get the locally closed embedding, note that $H$ is
quasi-projective and fix a $T$-equivariant locally closed embedding
$H \subset \mathbb{P}^N$. Then $(H)^+ \subset (\mathbb{P}^N)^+$
is also a locally closed embedding by Lemmas~1.4.7 and~1.4.9 from \cite{drinfeld}. Moreover, by
the classical Bia\l ynicki-Birula decomposition, each component of
$(\mathbb{P}^N)^+$ is locally closed in $\mathbb{P}^N$. Putting everything
together, a component of
$H^+$ is a locally closed subscheme of $\mathbb P^N$ that
factors through the locally closed $H$. It follows that
$H^{BB(\Delta,\xi)}\rightarrow H$ is a locally closed embedding.
\medskip

We conclude by proving that some Bia\l ynicki-Birula strata
are reducible for the Hilbert scheme of
$\mathbb{A}^3$.
It is a consequence of the reducibility of the Hilbert scheme
proved by Iarrobino in \cite{iarro}.
\begin{pro}\label{BBmayBeIrreducible}
  Let $n$ be such that the Hilbert scheme $H^n(\mathbb{A}^3) $ is reducible.
  Let   $n_1>>n_2>>n_3>>0$  and let $T$ be the one-dimensional torus acting on
  $\mathbb{A}^3$ via $t\cdot(x,y,z)=(t^{n_1}x,t^{n_2}y,t^{n_3}z)$. Then there exists $\Delta$ such that
  the Bia\l ynicki-Birula stratum $H^{BB(T,\Delta)}\subset  H^n(\mathbb{A}^3) $ is reducible.
\end{pro}
\begin{proof}
  Let $C_1\subset H^n(\mathbb{A}^3) $ be the component
  containing the unions of $n$ distinct
  points and $C_2\subset H^n(\mathbb{A}^3) $ be any other
  component.
  By the choice of the weights, the fixed points for the torus
  action are the monomial schemes $Z^\Delta$ with ideal $I^\Delta$. Thus,
  if $Z \in C_2\setminus C_1$,
  $\lim_{t\to 0}t\cdot Z=Z^{\Delta}$ for some
  $\Delta$. In particular,  $\dim(C_2\cap H^{BB(T,\Delta)})\geq 1$.

  We have $\dim (C_1\cap H^{BB(T,\Delta)})\geq 1$ too.
 Indeed, let $Z$ be the
  disjoint union  of points with integer coordinates $(i,j,k)$ with
  $(i,j,k)\in \Delta$. If $x^ay^bz^c \in I^\Delta$, any point
  in the support of $Z$ has coordinates  $0\leq x<a$ or $0\leq y<b$ or
  $0 \leq z<c$,
  thus $f=x(x-1)\ldots (x-(a-1))y(y-1)\ldots
  (y-(b-1))z(z-1)\ldots(z-(c-1))$ is in $I(Z)$. We have $\lim_{t \to
    0}t^{n_1a+n_2b+n_3c}(t\cdot f)=x^ay^bz^c $, thus
  $\lim_{t \to 0}t\cdot Z\subset Z^{\Delta}$. The equality  $\lim_{t \to 0}t\cdot Z=
  Z^{\Delta}$ follows since the two schemes have the same length.

  Thus
  $H^{BB(T,\Delta)}$ contains at least two components, one in $C_1$
  and one in $C_2$, meeting in $Z^\Delta$.
\end{proof}

\end{document}